\newcounter{thmcounter}
\numberwithin{thmcounter}{section}
\numberwithin{equation}{thmcounter}
\newtheorem{theorem}[thmcounter]{Theorem}
\newtheorem{proposition}[thmcounter]{Proposition}
\newtheorem{corollary}[thmcounter]{Corollary}
\theoremstyle{definition}
\newtheorem{definition}[thmcounter]{Definition}
\newtheorem{remark}[thmcounter]{Remark}
\newtheoremstyle{claim}{9pt}{3pt}{}{\parindent}{\bf}{.}{1em}{}
\theoremstyle{claim}
\newenvironment{namelist}[1]{%
\begin{list}{}
{
\settowidth{\labelwidth}{#1}%
\setlength{\labelsep}{0.3em}%
\setlength{\leftmargin}{\labelwidth}%
\addtolength{\leftmargin}{\labelsep}}}{%
\end{list}}
\newcommand{\nQ}{\mathbb{Q}}                     
\newcommand{\nP}{\mathbb{P}}                     
\newcommand{\nA}{\mathbb{A}}                     
\newcommand{\sO}{\mathscr{O}}                    
\newcommand{\sI}{\mathscr{I}}                    
\newcommand{\mf}[1]{\mathfrak{#1}}
\DeclareMathOperator{\Bl}{Bl}                    
\DeclareMathOperator{\codim}{codim}              
\DeclareMathOperator{\Fitt}{Fitt}                
\DeclareMathOperator{\Jac}{Jac}                  
\DeclareMathOperator{\reg}{reg}                  
\DeclareMathOperator{\Exc}{Exc}                  
\DeclareMathOperator{\rank}{rank}                
\newcounter{rkcounter}             
\begin{document}

\title[Mather-Jacobian multiplier ideals]{Vanishing theorems for Mather-Jacobian multiplier ideals on a Gorenstein projective variety}
\author{Wenbo Niu}

\address{Department of Mathematical Sciences, University of Arkansas, Fayetteville, AR 72701, USA}
\email{wenboniu@uark.edu}


\subjclass[2010]{14F18, 14F17}

\keywords{multiplier ideal, vanishing theorem, Gorenstein variety}

\begin{abstract} In this paper, we establish several results related to vanishing theorems for Mather-Jacobian multiplier ideals on a Gorenstein projective variety, including an injectivity theorem, a Nadel-type vanishing theorem, a Griffith-type vanishing theorem for vector bundles, and a vanishing theorem for the asymptotic Mather-Jacobian multiplier ideals.
\end{abstract}
\maketitle

\section{Introduction}
\noindent The purpose of this paper is to establish vanishing theorems for Mather-Jacobian multiplier ideals on a Gorenstein projective variety. The Kodaira Vanishing Theorem as well as its generalization Kawamata-Viehweg Vanishing Theorem play fundamental roles in algebraic geometry. Based on that, Nadel Vanishing Theorem can be proved for classical multiplier ideals on a nonsingular projective variety. In the last few years, the theory of multiplier ideals has been generalized on an arbitrary projective variety in a series of works by de Fernex, Docampo, Ein, Ishii, Musta{\c{t}}{\u{a}} (\cite{Ein:MultIdeaMatherDis}, \cite{Ein:SingMJDiscrapency}, \cite{Roi:JDiscrepancy}). This new notion of multiplier ideal is called the Mather-Jacobian multiplier ideal, or MJ-multiplier ideal for short. It does not require the normality of the variety but still has many similar properties as the classical multiplier ideals, such as local vanishing, subadditivity, etc. However, it was not clear what kind of Nadel-type vanishing theorem could be established for MJ-multiplier ideals, which is certainly one of the most interesting properties that one should hope for. In this paper, we are trying to discuss this problem by proving various vanishing theorems on a Gorenstein projective variety.

Let $X$ be a Gorenstein projective variety and $\omega_X$ be the invertible dualizing sheaf of $X$. We first prove the following injectivity theorem for MJ-mulitplier ideals (for details, see Theorem \ref{thm:02}).
$$H^i(X,\omega_X\otimes L\otimes \widehat{\sI}(\mf{a}^t))\hookrightarrow H^i(X,\omega_X\otimes L\otimes \sO_X(G)\otimes \widehat{\sI}(\mf{a}^t)), \quad \mbox{ for }i\geq 0.$$
(In the introductory part, we only outline the typical forms of the theorems without stating too much on the technical conditions.) The injectivity theorem for classical multiplier ideals was proved by Ein-Popa \cite{Ein:GlobalDivInj}.  These  are all multiplier-ideal-type variants of the injectivity theorem of Koll{\'a}r and Esnault-Viehweg (\cite{Kollar:HighDireDualSheaf}, \cite{Viehweg:LctVanishing}).

Based on the injectivity theorem, we are able to deduce a Nadel-type vanishing theorem for MJ-multiplier ideals in the following form (see Corollary \ref{thm:01} for details)
$$H^i(X,\omega_X\otimes L\otimes \widehat{\sI}(\mf{a}^t))=0,\quad\mbox{ for }i>0.$$
With a little more efforts, we can further deduce a Griffith-type vanishing theorem for a vector bundle $E$ on $X$ as follows (see Theorem \ref{thm:04} for details)
$$H^i(X,\omega_X\otimes \det E\otimes S^mE\otimes \widehat{\sI}(\mf{a}^t))=0,\quad \mbox{ for } i>0,\ m\geq 0.$$
This Griffith-type vanishing theorem for classical multiplier ideals has also been discussed by Paoletti \cite{Paoletti:RmkMultiIdeals}.
Finally, we point out that we actually can use a more straightforward method to establish the above vanishing theorem without using the injectivity theorem. We illustrate this approach by proving a vanishing theorem for asymptotic MJ-multiplier ideals as follows (see Theorem \ref{thm:03} for details)
$$H^i(X,\omega_X\otimes \sO_X(mL+A)\otimes \widehat{\sI}(||mL||))=0,\quad\mbox{ for }i>0.$$
However, the approach via the injective theorem could shed a light in future on establishing vanishing theorems of MJ-multiplier ideals on arbitrary varieties.

It is worth mentioning that the assumption that $X$ is Gorenstein is essential in our paper. First of all, it makes the dualizing sheaf $\omega_X$ invertible so that we can work on a resolution of singularities by using the projection formula. Secondly, it provides the advantage that the Nash blowup can be represented as a usual blowup along an ideal. Then passing to a log resolution of the variety, the desired vanishing theorems can be deduced from the classical Kawamata-Viehweg vanishing theorem. Once we work on an arbitrary variety without the Gorenstein condition, all key techniques that we have used will fail.  Therefore it leads to us a very interesting question that what kind of vanishing theorems we could expect for MJ-multiplier ideals on arbitrary projective varieties.

\vspace{0.3cm}

\section{Mather-Jacobian multiplier ideals}
\noindent Throughout the paper, we work over an algebraically closed field $k$ of characteristic zero. By variety we mean a separated reduced irreducible scheme of finite type over $k$. On a projective variety $X$, we use {\em line bundles} and {\em (Cartier) divisors} interchangeably (cf, \cite[Proposition II.6.15.]{Har:AG}). Furthermore, the concept of $\nQ$-divisors works perfectly on $X$ and one can define {\em nef} ({\em big, ample, semiample,} etc.) $\nQ$-divisors (for details, see \cite{Lazarsfeld:PosAG1}). A $\nQ$-divisor is called {\em $\nQ$-effective} if it is $\nQ$-linearly equivalent to an effective divisor. A nef $\nQ$-divisor is called {\em abundant} if its Iitaka dimension is equal to its numerical dimension (see, for example, \cite{Ein:GlobalDivInj}) and we discuss this in Definition \ref{def:02}. We also use $\omega_X$ to denote the dualizing sheaf of $X$ when $X$ is Cohen-Macaulay.\\

We recall the definition of Mather-Jacobian multiplier ideals. For more detailed discussion and properties, we refer to the paper \cite{Ein:MultIdeaMatherDis}.

\begin{definition} Let $X$ be a variety of dimension $n$ and $f:Y\longrightarrow X$ be a resolution of singularities factoring through the Nash blowup of $X$. Then the image of the canonical homomorphism
$$f^*(\wedge^n\Omega^1_X)\longrightarrow \wedge^n\Omega^1_Y$$
is an invertible sheaf of the form $\Jac_f\cdot\wedge^n\Omega^1_X$ where $\Jac_f$ is the relative Jacobian ideal. The effective divisor $\widehat{K}_{Y/X}$ defined by $\Jac_f$ is called the {\em Mather discrepancy divisor}.
\end{definition}

\begin{definition} Let $X$ be a variety and $\mf{a}\subseteq \sO_X$ a nonzero ideal on $X$. Given a log resolution $f:Y\longrightarrow X$ of $\Jac_X\cdot\mf{a}$ such that $\mf{a}\cdot\sO_Y=\sO_Y(-Z)$ and $\Jac_X\cdot\sO_Y=\sO_Y(-J_{Y/X})$ for some effective divisors $Z$ and $J_{Y/X}$ on $Y$ (such resolution automatically factors through the Nash blowup, see Remark 2.3 of \cite{Ein:MultIdeaMatherDis}). The {\em Mather-Jacobian multiplier ideal} of $\mf{a}$ of exponent $t\in \nQ_{\geq 0}$ is defined by
$$\widehat{\sI}(X,\mf{a}^t)=f_*\sO_Y(\widehat{K}_{Y/X}-J_{Y/X}-\lfloor tZ\rfloor),$$
where $\lfloor\ \ \rfloor$ means the round down of an $\nQ$-divisor. Sometimes we simply write it as $\widehat{\sI}(\mf{a}^t)$ and call it as {\em MJ-multiplier ideal}.
\end{definition}

\begin{definition}\label{def:01} Let $X$ be a Gorenstein variety of dimension $n$. Consider the fundamental class of $X$,
$$c_X:\wedge^n\Omega^1_X\longrightarrow \omega_X.$$
The image of $c_X$ is $\mf{J}_1\otimes \omega_X$, where $\mf{J}_1$ is called the {\em ideal of the first Nash subscheme} of $X$. The {\em lci-defect ideal} of $X$ is defined to be $\mf{d}_X=(\Jac_X:\mf{J}_1)$ where $\Jac_X$ is the Jacobian ideal of $X$.
\end{definition}

\begin{remark} The ideal $\mf{d}_X$ defines a subscheme of $X$ supported on the locus of non local complete intersection points of $X$. This follows from the fact that when $X$ is a local complete intersection, the image of $c_X$ is $\Jac_X\otimes\omega_X$ and therefore in this case $\mf{d}_X=\sO_X$. The ideal $\mf{d}_X$ was also defined and studied in \cite{Roi:JDiscrepancy}.

An important property of the ideal $\mf{J}_1$ is that we can use it to represent the Nash blowup of $X$. More precisely, consider the projection morphism
$$\pi:\nP(\wedge^n\Omega^1_X)\longrightarrow X$$
which is an isomorphism over the nonsingular locus $X_{\reg}$ of $X$. The {\em Nash blowup} $\widehat{X}$ is the closure of $\pi^{-1}(X_{\reg})$ in $\nP(\wedge^n\Omega^1_X)$ with the reduced scheme structure. Restricting the tautological bundle of $\nP(\wedge^n\Omega^1_X)$ to $\widehat{X}$, we obtain the {\em Mather canonical divisor} $\sO_{\widehat{X}}(\widehat{K})$ on $\widehat{X}$. Now since the morphism $c_X$ has image $\mf{J}_1\otimes \omega_X$ and $\omega_X$ is a line bundle, it induces an isomorphism from $\widehat{X}$ to the blowup of $X$ along $\mf{J}_1$, i.e.
$$\widehat{X}\simeq \Bl_{\mf{J}_1}X.$$
Furthermore, by tracing the tautological bundles, we see that
$$\mf{J}_1\cdot\sO_{\widehat{X}}\simeq\sO_{\widehat{X}}(\widehat{K})\otimes \pi^*\omega^{-1}_X.$$
This is the main reason that we need to assume $X$ is Gorenstein throughout this paper.
\end{remark}

The following proposition was proved for normal $\nQ$-Gorenstein varieties in \cite[Corollary 9.3]{Ein:JetSch}. We follow the same idea to prove it for Gorenstein varieties without assuming the normality.

\begin{proposition}\label{p:01} Let $X$ be a Gorenstein variety. Then one has
$$\Jac_X=\mf{d}_X\cdot \mf{J}_1,$$
where $\Jac_X$ is the Jacobian ideal, $\mf{J}_1$ is the ideal of the first Nash subscheme, and $\mf{d}_X$ is the lci-defect ideal.
\end{proposition}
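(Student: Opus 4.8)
The plan is to reduce the statement to a local computation on $X$ and then to exploit the description of the Nash blowup $\widehat{X}$ as the blowup of $X$ along $\mf{J}_1$, which the preceding remark records. Since all three ideals $\Jac_X$, $\mf{d}_X$, $\mf{J}_1$ are defined by their stalks, and the non-local-complete-intersection locus is closed, it suffices to verify the equality $\Jac_X = \mf{d}_X \cdot \mf{J}_1$ after localizing at an arbitrary point $x \in X$; moreover outside the non-lci locus we already have $\mf{d}_X = \sO_X$ and $\Jac_X = \mf{J}_1$ (as noted in the remark, the image of $c_X$ is $\Jac_X \otimes \omega_X$ when $X$ is lci), so the content is concentrated at non-lci points. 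The key input is that $c_X$ has image $\mf{J}_1 \otimes \omega_X$ with $\omega_X$ invertible, so after trivializing $\omega_X$ locally the fundamental class becomes a map $\wedge^n \Omega^1_X \to \sO_X$ with image exactly $\mf{J}_1$.

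The heart of the argument, following \cite[Corollary 9.3]{Ein:JetSch}, is to pass to the Nash blowup $\pi \colon \widehat{X} = \Bl_{\mf{J}_1} X \to X$. On $\widehat{X}$ the ideal $\mf{J}_1 \cdot \sO_{\widehat{X}}$ is invertible — indeed the remark identifies it with $\sO_{\widehat{X}}(\widehat{K}) \otimes \pi^* \omega_X^{-1}$ — and the pulled-back fundamental class $\pi^*(\wedge^n\Omega^1_X) \to \pi^*\omega_X$ factors through this invertible subsheaf; that is precisely the statement that $\widehat{X}$ factors the Nash blowup, giving the Mather discrepancy/canonical divisor. Now compute $\Jac_X \cdot \sO_{\widehat{X}}$. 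Because $\Jac_X$ is the $0$-th Fitting ideal of $\Omega^1_X$ (equivalently the image of $\wedge^n$ applied to a presentation of $\Omega^1_X$), and because on $\widehat{X}$ the universal quotient of $\pi^*\Omega^1_X$ becomes locally free of rank $n$, one gets that $\Jac_X \cdot \sO_{\widehat{X}}$ equals $(\mf{J}_1 \cdot \sO_{\widehat{X}})$ times the Fitting-type ideal measuring the failure of $\pi^*\Omega^1_X$ to be locally free of rank $n$ — and the latter is exactly $\mf{d}_X \cdot \sO_{\widehat{X}}$ by the definition $\mf{d}_X = (\Jac_X : \mf{J}_1)$, once one checks that $\mf{d}_X \cdot \sO_{\widehat{X}}$ is computed correctly under pullback. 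Concretely I would argue: $\mf{J}_1 \cdot \sO_{\widehat X}$ is invertible, so $(\Jac_X \cdot \sO_{\widehat X}) \colon (\mf{J}_1 \cdot \sO_{\widehat X}) = \mf{d}_X \cdot \sO_{\widehat X}$ and $\Jac_X \cdot \sO_{\widehat X} = \mf{d}_X \cdot \mf{J}_1 \cdot \sO_{\widehat X}$; then push down.

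The main obstacle is the descent step: having proved the equality of ideal sheaves after pulling back to $\widehat{X}$, one must deduce the equality back on $X$. The containment $\mf{d}_X \cdot \mf{J}_1 \subseteq \Jac_X$ is immediate from the definition of the colon ideal, so the work is the reverse containment $\Jac_X \subseteq \mf{d}_X \cdot \mf{J}_1$. For this I would use that $\mf{J}_1$ is, locally, generated by the maximal minors of the matrix representing $c_X$, together with the fact that the blowup $\widehat X \to X$ is birational and proper, so that $\Gamma$ of the relevant sheaves on $\widehat X$ recovers the integral closure — but here, crucially, one must argue that no integral-closure correction is needed, i.e. that $\mf{d}_X \cdot \mf{J}_1$ is already integrally closed along the exceptional locus, or bypass this by a direct ring-theoretic manipulation with the Fitting ideal $\Jac_X = \Fitt_0(\Omega^1_X)$ and the transpose of the presentation matrix, exactly as in the $\nQ$-Gorenstein normal case of de Fernex--Ein. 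I expect the Gorenstein (hence Cohen--Macaulay) hypothesis to enter precisely to guarantee $\omega_X$ is invertible so that trivializing it is harmless, and to ensure the colon ideal behaves well; the normality used in \cite{Ein:JetSch} should be replaceable because the whole computation is about Fitting ideals of $\Omega^1_X$, which make sense and localize correctly without normality.
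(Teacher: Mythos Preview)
Your approach via the Nash blowup has a genuine gap, and in fact the argument is circular before you even reach the descent step you flag as the ``main obstacle.'' You claim that invertibility of $\mf{J}_1\cdot\sO_{\widehat X}$ gives $(\Jac_X\cdot\sO_{\widehat X}):(\mf{J}_1\cdot\sO_{\widehat X})=\mf{d}_X\cdot\sO_{\widehat X}$, but colon ideals do not commute with pullback. Invertibility only tells you that some ideal $\mf{c}$ on $\widehat X$ satisfies $\mf{c}\cdot(\mf{J}_1\cdot\sO_{\widehat X})=\Jac_X\cdot\sO_{\widehat X}$; one always has $\mf{d}_X\cdot\sO_{\widehat X}\subseteq\mf{c}$, but the reverse inclusion is equivalent to the very equality $\Jac_X=\mf{d}_X\cdot\mf{J}_1$ you are trying to prove. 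Even granting the equality upstairs, pushing forward along a blowup controls at best integral closures, so you would only obtain $\overline{\Jac_X}=\overline{\mf{d}_X\cdot\mf{J}_1}$; your suggested fixes (no integral-closure correction needed, or an unspecified Fitting-ideal manipulation) are not arguments but restatements of the difficulty.

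The paper avoids both problems by never leaving $X$, and this is in fact what \cite[Corollary~9.3]{Ein:JetSch} does---it does not go through the Nash blowup, contrary to your reading. Working locally with $X\subset\nA^N$ cut out by $I_X=(F_1,\dots,F_r)$ with the $F_i$ general, every size-$c$ subset $J$ gives a complete intersection $V_J\supset X$ with residual ideal $q_J=(I_J:I_X)\cdot\sO_X$. The key input, \cite[Proposition~9.1]{Ein:JetSch}, identifies the images of $\omega_X$ and of $\mf{J}_1\otimes\omega_X$ inside $\omega_{V_J}|_X$ explicitly, yielding $\mf{J}_1\cdot q_J=\Jac_{V_J}\cdot\sO_X$ as an equality of ideals \emph{on $X$}. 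Summing over $J$ and using that $\Jac_X=\sum_J\Jac_{V_J}\cdot\sO_X$ (the Jacobian is generated by all $c\times c$ minors) gives $\mf{J}_1\cdot\mf{b}=\Jac_X$ for $\mf{b}=\sum_J q_J$. Since automatically $\mf{b}\subseteq(\Jac_X:\mf{J}_1)=\mf{d}_X$, the hard inclusion $\Jac_X\subseteq\mf{d}_X\cdot\mf{J}_1$ follows. The point is that one exhibits a concrete ideal $\mf{b}$ on $X$ itself with $\mf{J}_1\cdot\mf{b}=\Jac_X$, rather than trying to descend such an ideal from a blowup.
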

\begin{proof} It suffices to prove $\Jac_X\subseteq \mf{d}_X\cdot \mf{J}_1$. The question is local, so we can assume that $X$ is affine in $\nA^N$ and is defined by an ideal $I_X$.  Let $n=\dim X$ and $c=\codim_{\nA^N}X$ and assume that $I_X:=(F_1,F_2,\cdots,F_r)$. We can take the generators $F_i$'s general such that any $c$ of them will define a general complete intersection containing $X$. More precisely, let $J\subset \{1,2,\cdots, r\}$ with $|J|=c$, let $I_J$ be the ideal generated by $F_i$'s with $i\in J$, and let $V_J$ be the subscheme of $\nA^N$ defined by $I_J$. If we take $F_i$'s general, then each $V_J$ is a complete intersection in $\nA^N$ and $V_J=X$ at the generic point of $X$. Set $q_J=(I_J:I_X)\cdot\sO_X$ and $\Jac_J=\Jac_{V_J}\cdot \sO_X$ where $\Jac_{V_J}$ is the Jacobian ideal of $V_J$. Write $\omega_J$ to be the dualizing sheaf of $V_J$. Consider the following canonical morphisms
$$\wedge^n\Omega^1_X\stackrel{c_X}{\longrightarrow}\omega_X\stackrel{u_J}{\longrightarrow}\omega_J|_X.$$
It has been proved in \cite[Proposition 9.1]{Ein:JetSch} that $u_J$ is an injective and
$$u_J(\omega_X)=q_J\otimes \omega_J|_X, \quad u_J(\mf{J}_1\otimes \omega_X)=\Jac_J\otimes \omega_J
|_X.$$
Hence we get an equality $\mf{J}_1\cdot q_J=\Jac_J$. Set $\mf{b}=\sum_J q_J$ and note that $\Jac_X=\sum_J\Jac_J$, we then deduce that $\mf{J}_1\cdot \mf{b}=\Jac_X$.
Finally, since $\Jac_X=\mf{J}_1\cdot \mf{b}\subseteq\mf{J}_1\cdot \mf{d}_X\subseteq\Jac_X$, the result follows immediately.
\end{proof}

The first part of the following proposition is well-know but we still include it here for the convenience of the reader.
\begin{proposition}\label{q:01} Let $f:Y\longrightarrow X$ be a smooth morphism of varieties.
\begin{itemize}
\item [(1)] Let $\Jac_X$ and $\Jac_Y$ be the Jacobian ideals of $X$ and $Y$ respectively, then
$$\Jac_Y=\Jac_X\cdot \sO_Y.$$
\item [(2)] Assume that $X$ and $Y$ are Gorenstein varieties and let $\mf{J}^X_1$ and $\mf{J}^Y_1$ be the ideals of the first Nash subschemes of $X$ and $Y$ respectively, then
$$\mf{J}^Y_1=\mf{J}^X_1\cdot \sO_Y.$$
Furthermore, let $\mf{d}_X$ and $\mf{d}_Y$ be the lci-defect ideals of $X$ and $Y$ respectively, then
$$\mf{d}_Y=\mf{d}_X\cdot \sO_Y.$$
\end{itemize}
\end{proposition}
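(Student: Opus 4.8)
\textit{Approach.} All three equalities will be deduced from base-change principles: Fitting ideals for (1), the fundamental class for the first half of (2), and colon ideals for the second half of (2). Everything is local on $Y$ (hence on $X$), $f$ is flat, and we will freely use the exact sequence of a smooth morphism
$$0\longrightarrow f^*\Omega^1_X\longrightarrow\Omega^1_Y\longrightarrow\Omega^1_{Y/X}\longrightarrow 0,$$
in which $\Omega^1_{Y/X}$ is locally free of rank $d=\dim Y-\dim X$; it splits over any open where $\Omega^1_{Y/X}$ is free, so that locally on $Y$ one has $\Omega^1_Y\cong f^*\Omega^1_X\oplus\sO_Y^{\oplus d}$. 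We write $\omega_{Y/X}=\wedge^{d}\Omega^1_{Y/X}$ and use the canonical identification $\omega_Y\cong f^*\omega_X\otimes\omega_{Y/X}$, valid for $f$ smooth and $X$ Cohen--Macaulay.

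\textit{Part (1).} I would use $\Jac_X=\Fitt_{n}(\Omega^1_X)$ with $n=\dim X$, which agrees with the complete-intersection-minors description appearing in the proof of Proposition \ref{p:01}. Since Fitting ideals commute with base change and are multiplicative for direct sums, the local splitting above gives
$$\Jac_Y=\Fitt_{n+d}(\Omega^1_Y)=\sum_{i=0}^{n}\Fitt_i(f^*\Omega^1_X)=\Fitt_n(f^*\Omega^1_X)=\Fitt_n(\Omega^1_X)\cdot\sO_Y=\Jac_X\cdot\sO_Y,$$
where the middle steps use $\Fitt_j(\sO_Y^{\oplus d})=\sO_Y$ for $j\ge d$ together with the chain $\Fitt_0\subseteq\Fitt_1\subseteq\cdots$. (Equivalently, one may factor $f$ Zariski-locally as an étale morphism followed by the projection $X\times\nA^{d}\to X$ and compare Jacobian minors directly.)

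\textit{Part (2), the ideal $\mf{J}_1$.} The class $c_X$ factors as $\wedge^{n}\Omega^1_X\twoheadrightarrow\wedge^{n}\Omega^1_X/(\text{torsion})\xrightarrow{\;\bar c_X\;}\omega_X$, where $\bar c_X$ is injective — its kernel is the torsion of $\wedge^n\Omega^1_X$, since $c_X$ is an isomorphism over $X_{\reg}$ — with image $\mf{J}^X_1\cdot\omega_X$; in particular $\wedge^{n}\Omega^1_X/(\text{torsion})$ is torsion-free of rank one, and the same holds over $Y$. Pulling $\bar c_X$ back along the flat morphism $f$ keeps it injective, keeps its source torsion-free (it embeds into the line bundle $f^*\omega_X$), and keeps it an isomorphism over $Y_{\reg}=f^{-1}(X_{\reg})$. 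On the other hand, the local splitting of the displayed sequence, together with the fact that $\wedge^{p}$ of a coherent sheaf of generic rank $n$ is torsion for $p>n$, produces a canonical isomorphism
$$\wedge^{n+d}\Omega^1_Y/(\text{torsion})\;\xrightarrow{\;\sim\;}\;f^*\!\big(\wedge^{n}\Omega^1_X/(\text{torsion})\big)\otimes\omega_{Y/X}.$$
The crucial point is that, under $\omega_Y\cong f^*\omega_X\otimes\omega_{Y/X}$, this isomorphism carries $\bar c_Y$ to $(f^*\bar c_X)\otimes\id_{\omega_{Y/X}}$. Both are maps of torsion-free sheaves into the line bundle $\omega_Y$, so it suffices to check their agreement over the dense open $Y_{\reg}$, where all sheaves of differentials are locally free and both reduce to the canonical top-exterior-power isomorphism attached to $0\to f^*\Omega^1_X\to\Omega^1_Y\to\Omega^1_{Y/X}\to 0$. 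Comparing images then yields
$$\mf{J}^Y_1\cdot\omega_Y=\im\bar c_Y=f^*(\im\bar c_X)\otimes\omega_{Y/X}=(\mf{J}^X_1\cdot\sO_Y)\otimes\omega_Y,$$
whence $\mf{J}^Y_1=\mf{J}^X_1\cdot\sO_Y$ after twisting by $\omega_Y^{-1}$. This compatibility of the fundamental class with the smooth morphism $f$ is the step I expect to be the main obstacle; once it is in place the rest is formal.

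\textit{Part (2), the ideal $\mf{d}$.} By definition $\mf{d}_X=(\Jac_X:\mf{J}^X_1)$. Colon ideals of coherent sheaves commute with flat base change: writing $(\mf a:\mf b)=\ker\!\big(\sO_X\to\sHom_{\sO_X}(\mf b,\sO_X/\mf a)\big)$ and using that $f^*$ is exact and commutes with $\sHom$ out of a coherent sheaf, one gets $(\mf a\cdot\sO_Y:\mf b\cdot\sO_Y)=(\mf a:\mf b)\cdot\sO_Y$. Combining this with part (1) and the previous paragraph,
$$\mf{d}_Y=(\Jac_Y:\mf{J}^Y_1)=(\Jac_X\cdot\sO_Y:\mf{J}^X_1\cdot\sO_Y)=(\Jac_X:\mf{J}^X_1)\cdot\sO_Y=\mf{d}_X\cdot\sO_Y,$$
which completes the proof.
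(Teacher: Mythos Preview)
Your argument is correct. Part (1) and the $\mf{d}$-statement in (2) are handled exactly as in the paper (Fitting ideals plus the local splitting of $\Omega^1_Y$, and flat base change for colon ideals, respectively; the paper just says the latter is ``a direct consequence of the definition and flatness'' without spelling out the $\sHom$ description you give).

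For $\mf{J}_1$ you take a genuinely different route. The paper first disposes of the \'etale case, then reduces to $Y=X\times_k\nA^r$, embeds $X\subset\nA^N$, and reuses the complete-intersection/residual machinery of Proposition~\ref{p:01}: from $\mf{J}^X_1\cdot I_{X'/X}=\Jac_M\cdot\sO_X$ and its analogue on $Y$ one obtains $(\mf{J}^X_1\cdot\sO_Y)\cdot I_{Y'/Y}=\mf{J}^Y_1\cdot I_{Y'/Y}$, and then cancels the invertible ideal $I_{Y'/Y}$. Your approach is intrinsic: you work directly with the fundamental class, pass to the torsion-free quotient of $\wedge^{\ttop}\Omega^1$, use flatness to pull back the injection $\bar c_X$, and identify $\bar c_Y$ with $(f^*\bar c_X)\otimes\id_{\omega_{Y/X}}$ by checking agreement on the dense open $Y_{\reg}$. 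This avoids the \'etale/projection case split and any choice of affine embedding, and is conceptually cleaner; the price is the somewhat delicate step of producing the \emph{canonical} isomorphism $\wedge^{n+d}\Omega^1_Y/(\text{torsion})\cong f^*(\wedge^{n}\Omega^1_X/(\text{torsion}))\otimes\omega_{Y/X}$ and verifying compatibility with $c_Y$---your justification via the Koszul filtration (whose higher graded pieces are torsion) and the ``two maps of torsion-free sheaves into a line bundle agreeing on a dense open'' principle is sound. The paper's approach, by contrast, recycles the apparatus already set up for Proposition~\ref{p:01} and is more hands-on.
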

\begin{proof} (1) Since $f$ is smooth, we have a short exact sequence
$$0\longrightarrow f^*\Omega^1_X\longrightarrow \Omega^1_Y\longrightarrow \Omega^1_{Y/X}\longrightarrow 0,$$
where $\Omega^1_{Y/X}$ is a locally free sheaf of rank $e$. The above sequence is locally splitting, so working locally at each closed point, we can assume
$$\Omega^1_Y=f^*\Omega^1_X\oplus \Omega^1_{Y/X}.$$
Now since $\Omega^1_{Y/X}$ is locally free of rank $e$, it is easy to see that for any $i\geq 0$,
$$\Fitt^{i+e}\Omega^1_Y=\Fitt^i f^*\Omega^1_X=(\Fitt^i\Omega^1_X)\cdot\sO_Y.$$
Take $i=\dim X$, then we get the desired result.

(2)  We prove the equality $\mf{J}^Y_1=\mf{J}^X_1\cdot \sO_Y$ first. Let $\omega_X$ and $\omega_Y$ be the dualizing sheaves of $X$ and $Y$. Note that they are all invertible sheaves since the varieties are Gorenstein. Assume first that the morphism $f$ is \'{e}tale. Then we have the identities  $$f^*\Omega^1_X=\Omega^1_Y, \quad \mbox{ and } f^*\omega_X=\omega_Y.$$ The result then follows immediately by the definition of $\mf{J}_1$.

For the general case, locally around any closed point of $Y$, the morphism $f$ can be factored as $U\longrightarrow X\times_k\nA^r\longrightarrow X$, where the first morphism is an \'{e}tale cover and the second one is the projection. Therefore it suffices to prove the case that $Y=X\times_k\nA^r$ for $r\geq 1$, and the morphism $f:Y\longrightarrow X$ is the projection. We can further assume that $X$ is affine, embedded in an affine space $X\subset \nA^N$, and defined by an ideal $I_X$. Therefore, $Y$ can be assumed to be embedded in the affine space $\nA^{N+r}$ and defined by $I_Y$. We write $p$ and $q$ as the natural projections of $\nA^{N+r}=\nA^N\times\nA^r$ to $\nA^N$ and $\nA^r$, respectively. Note that $f=p|_Y$.

Write $d=\dim X$ and $c=\codim_{\nA^N}X$. We can choose $c$ general equations $f_1,\cdots, f_c\in I_X$ such that the ideal $I_M=(f_1,\cdots, f_c)$ defines a complete intersection $M$ in $\nA^N$ and $M=X$ at the generic point of $X$. Let $I_{X'}=(I_M:I_X)$ be the ideal defining the residual part $X'$ of $X$ in $M$ and write $I_{X'/X}=I_{X'}\cdot\sO_X$. Then there are following canonical morphisms (\cite[Appendix 9.]{Ein:JetSch})
$$\wedge^d\Omega^1_X\stackrel{c_X}{\longrightarrow}\omega_X\stackrel{u}{\longrightarrow}\omega_M|_X$$
in which the morphism $u$ is injective and makes $\omega_X$ as a $\sO_X$-submodule of $\omega_M|_X$. Under the morphism $u$, we have $u(\omega_X)=I_{X'/X}\otimes \omega_M|_X$  and $u(\mf{J}^X_1\otimes \omega_X)=\Jac_M\cdot\sO_X\otimes \omega_M|_X$, where $\Jac_M$ is the Jacobian ideal of $M$. Hence we get the equality
\begin{equation}\label{eq:05}
\mf{J}^X_1\cdot I_{X'/X}=\Jac_M\cdot\sO_X.
\end{equation}

Since $p$ is the projection, the ideal $I_{M'}=I_M\cdot \sO_{\nA^{N+r}}$ also defines a complete intersection $M'$ in $\nA^{N+r}$ such that $M'=Y$ at the generic point of $Y$. Let $I_{Y'}=(I_{M'}:I_Y)$ be the ideal of the residual part $Y'$ of $Y$ in $M'$ and write $I_{Y'/Y}=I_{Y'}\cdot\sO_Y$. Notice that $I_{Y'}=I_{X'}\cdot\sO_{\nA^{N+r}}$ and therefore
\begin{equation}\label{eq:07}
I_{X'/X}\cdot\sO_Y=I_{Y'/Y}
\end{equation} Exactly as in the case of $X$, we have canonical morphisms
$$\wedge^{d+r}\Omega^1_Y\stackrel{c_Y}{\longrightarrow}\omega_Y\stackrel{v}{\longrightarrow}\omega_{M'}|_Y$$
where $v$ is an injection and makes $\omega_Y$ as a $\sO_Y$-submodule of $\omega_{M'}|_Y$. We obtain that $v(\omega_Y)=I_{Y'/Y}\otimes \omega_{M'}|_Y$ and $v(\mf{J}^Y_1\otimes\omega_Y)=\Jac_{M'}\cdot\sO_Y\otimes \omega_{M'}|_Y$, where $\Jac_{M'}$ is the Jacobian ideal of $M'$. Thus we get the equality
\begin{equation}\label{eq:06}
\mf{J}^Y_1\cdot I_{Y'/Y}=\Jac_{M'}\cdot\sO_Y.
\end{equation}

Now we pull back the equality (\ref{eq:05}) and notice that $\Jac_{M'}\cdot\sO_Y=(\Jac_M\cdot\sO_X)\cdot\sO_Y$ by Proposition \ref{q:01} as well as the equality (\ref{eq:07}) so we obtain
$(\mf{J}^X_1\cdot\sO_Y)\cdot I_{Y'/Y}=\Jac_{M'}\cdot\sO_Y$.
Hence by using (\ref{eq:06}), we deduce that
$$(\mf{J}^X_1\cdot\sO_Y)\cdot I_{Y'/Y}=\mf{J}^Y_1\cdot I_{Y'/Y}.$$
This implies $(\mf{J}^X_1\cdot\sO_Y)=\mf{J}^Y_1$ because $Y$ is Gorenstein and $I_{Y'/Y}$ is an invertible sheaf.

For the equality $\mf{d}_Y=\mf{d}_X\cdot \sO_Y$, base on what we have proved, it is a direct consequence of the definition of lci-defect ideals and the fact that the morphism $f$ is flat.
\end{proof}

\begin{proposition}\label{p:02} Let $f:Y\longrightarrow X$ be a smooth morphism of varieties. Let $\mf{a}$ be an ideal of $\sO_X$ and $t\in \nQ_{\geq 0}$. Write $\mf{b}=\mf{a}\cdot\sO_Y$. Then one has
$$\widehat{\sI}(Y,\mf{b}^t)=\widehat{\sI}(X,\mf{a}^t)\cdot\sO_Y$$
\end{proposition}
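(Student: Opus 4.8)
The plan is to compare log resolutions of $X$ and $Y$ compatibly with the smooth morphism $f$, and then reduce the identity of pushforwards to the behaviour of the relevant divisors under pullback. First I would take a log resolution $g\colon X'\longrightarrow X$ of $\Jac_X\cdot\mf{a}$, so that $\mf{a}\cdot\sO_{X'}=\sO_{X'}(-Z)$ and $\Jac_X\cdot\sO_{X'}=\sO_{X'}(-J_{X'/X})$, and form the fiber product $Y'=X'\times_X Y$ with projections $h\colon Y'\longrightarrow Y$ and $f'\colon Y'\longrightarrow X'$. Since $f$ is smooth, $f'$ is smooth and $h$ is again a log resolution of $\Jac_Y\cdot\mf{b}$: indeed $Y'$ is smooth (being smooth over the smooth $X'$), the relevant total transforms remain simple normal crossings, and by Proposition \ref{q:01}(1) together with $\mf{b}=\mf{a}\cdot\sO_Y$ we get $\Jac_Y\cdot\sO_{Y'}=(\Jac_X\cdot\sO_{X'})\cdot\sO_{Y'}=\sO_{Y'}(-f'^{*}J_{X'/X})$ and $\mf{b}\cdot\sO_{Y'}=\sO_{Y'}(-f'^{*}Z)$.

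Next I would identify the three divisors appearing in the definition of $\widehat{\sI}$ under the pullback $f'^{*}$. The Jacobian-discrepancy term is $J_{Y'/Y}=f'^{*}J_{X'/X}$ by the computation just given, and $\lfloor t\,f'^{*}Z\rfloor = f'^{*}\lfloor tZ\rfloor$ because $f'$ is smooth (so pullback of a divisor supported on an snc divisor is just the snc divisor with the same coefficients, and rounding commutes with this pullback). The Mather discrepancy term requires the description of the Nash blowup from the excerpt: since $X$ is Gorenstein, $\widehat{X}\simeq\Bl_{\mf{J}^X_1}X$ and $\mf{J}^X_1\cdot\sO_{\widehat{X}}\simeq\sO_{\widehat{X}}(\widehat{K})\otimes\pi^{*}\omega_X^{-1}$, and likewise for $Y$. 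By Proposition \ref{q:01}(2), $\mf{J}^Y_1=\mf{J}^X_1\cdot\sO_Y$, so the blowup of $Y$ along $\mf{J}^Y_1$ is the base change of the blowup of $X$ along $\mf{J}^X_1$; combined with $\omega_Y=f^{*}\omega_X$ (true for a smooth morphism between Gorenstein varieties, the relative dualizing sheaf being the determinant of the locally free $\Omega^1_{Y/X}$, which only shifts by a divisor disjoint from the exceptional locus — more precisely $\widehat{K}_{Y'/Y}=f'^{*}\widehat{K}_{X'/X}$ after accounting for the extra forms, which contribute to $\Omega^1$ symmetrically on source and target), this gives $\widehat{K}_{Y'/Y}=f'^{*}\widehat{K}_{X'/X}$. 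Putting the three together, $\widehat{K}_{Y'/Y}-J_{Y'/Y}-\lfloor t\,f'^{*}Z\rfloor = f'^{*}\bigl(\widehat{K}_{X'/X}-J_{X'/X}-\lfloor tZ\rfloor\bigr)$.

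Finally I would pass from this divisor identity to the statement about multiplier ideals via flat base change. Writing $D=\widehat{K}_{X'/X}-J_{X'/X}-\lfloor tZ\rfloor$, we have $\widehat{\sI}(X,\mf{a}^t)=g_*\sO_{X'}(D)$ and $\widehat{\sI}(Y,\mf{b}^t)=h_*\sO_{Y'}(f'^{*}D)$. Since $f$ is flat, the square $Y'\to X'$, $Y'\to Y$, $X'\to X$, $Y\to X$ is a flat base change square, so $h_*\sO_{Y'}(f'^{*}D)=h_* f'^{*}\sO_{X'}(D)=f^{*}g_*\sO_{X'}(D)=g_*\sO_{X'}(D)\cdot\sO_Y$, the last step because $f^{*}$ of a coherent ideal-type subsheaf of $\sO_X$ is its expansion to $\sO_Y$ (flatness again). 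This yields $\widehat{\sI}(Y,\mf{b}^t)=\widehat{\sI}(X,\mf{a}^t)\cdot\sO_Y$. The main obstacle is the careful bookkeeping in the second step — specifically verifying $\widehat{K}_{Y'/Y}=f'^{*}\widehat{K}_{X'/X}$, which hinges on $\omega_Y=f^{*}\omega_X$ and on the Nash-blowup-as-blowup-along-$\mf{J}_1$ description behaving well under base change; everything else is formal once Propositions \ref{q:01} and the structure of $\widehat{X}$ are in hand.
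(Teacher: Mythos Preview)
Your overall architecture --- base change along the smooth map $f$ to obtain a compatible log resolution $Y'=X'\times_X Y$, identify the three divisors under $f'^{*}$, then conclude by flat base change --- is exactly the paper's. The divergence, and the genuine gap, is in your verification that $\widehat{K}_{Y'/Y}=f'^{*}\widehat{K}_{X'/X}$.

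You invoke the description of the Nash blowup as $\Bl_{\mf{J}_1}X$ together with $\omega_Y=f^{*}\omega_X$. Both of these require $X$ and $Y$ to be Gorenstein, but the proposition as stated carries no such hypothesis. Moreover, the equality $\omega_Y=f^{*}\omega_X$ is false for a smooth morphism of positive relative dimension: one has $\omega_Y\simeq f^{*}\omega_X\otimes\det\Omega^1_{Y/X}$, and the relative canonical factor does not vanish. Your parenthetical hedge --- that the extra forms ``contribute to $\Omega^1$ symmetrically on source and target'' --- points at the right cancellation (indeed $\det\Omega^1_{Y'/X'}=h^{*}\det\Omega^1_{Y/X}$ by base change, and these terms would cancel after careful bookkeeping), but as written the argument is muddled and, in any case, valid only under an added Gorenstein assumption.

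The paper's route is both simpler and more general. Since $h\colon Y'\to Y$ is the base change of $g\colon X'\to X$ along $f$, one has $\Omega^1_{Y'/Y}=f'^{*}\Omega^1_{X'/X}$, whence $\Fitt^0(\Omega^1_{Y'/Y})=\Fitt^0(\Omega^1_{X'/X})\cdot\sO_{Y'}$. The Mather discrepancy is \emph{defined} by this relative Jacobian ideal, so $\widehat{K}_{Y'/Y}=f'^{*}\widehat{K}_{X'/X}$ follows in one line, with no Gorenstein hypothesis and no appeal to the Nash blowup description or to Proposition~\ref{q:01}(2). Replace your second paragraph with this observation and the rest of your proof goes through verbatim.
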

\begin{proof} Take a log resolution of $\mf{a}\cdot \Jac_X$ as $\mu:X'\longrightarrow X$ such that $\mf{a}\cdot\sO_{X'}=\sO_{X'}(-Z)$ and $\Jac_X\cdot\sO_{X'}=\sO_{X'}(-J_{X'/X})$ for some effective divisors $Z$ and $J_{X'/X}$. We make a fiber product $Y'=X'\times_X Y$ and obtain the following diagram
$$\begin{CD}
Y'@>\nu>> Y\\
@VV f'V @VVf V\\
X'@>\mu>>X
\end{CD}$$
Since $f$ is smooth and by the construction of $\mu$, $Y'$ is nonsingular and $\nu:Y'\longrightarrow Y$ is a log resolution of $\mf{b}\cdot \Jac_Y$ and therefore $\mf{b}\cdot\sO_{Y'}=\sO_{Y'}(-Z')$ and $\Jac_Y\cdot\sO_{Y'}=\sO_{Y'}(-J_{Y'/Y})$ for some effective divisors $Z'$ and $J_{Y'/Y}$. Note that
$$f'^*Z=Z'\quad \mbox{and }\quad f'^*J_{X'/X}=J_{Y'/Y}.$$
In addition, by the base change, we have $\Omega^1_{Y'/Y}=f'^*\Omega^1_{X'/X}$ which implies $\Fitt^0(\Omega^1_{Y'/Y})=\Fitt^0(\Omega^1_{X'/X})\cdot\sO_{Y'}$. Hence we deduce that
$$f'^*(\widehat{K}_{X'/X})=\widehat{K}_{Y'/Y}.$$

Now write $$D=\widehat{K}_{X'/X}-J_{X'/X}-\lfloor tZ\rfloor\quad \mbox{ and }\quad D'=\widehat{K}_{Y'/Y}-J_{Y'/Y}-\lfloor tZ'\rfloor.$$
By our setting, we have $f'^*D=D'$. By the definition, we have
$$\widehat{\sI}(Y,\mf{b}^t)=\nu_*\sO_{Y'}(D),\quad \mbox{ and }\widehat{\sI}(X,\mf{a}^t)=\mu_*\sO_{X'}(D).$$
But since $f$ is flat, by \cite[III.9.3.]{Har:AG}, $f^*\mu_*\sO_{X'}(D)=\nu_*f'^*\sO_{X'}(D)$. Finally by using $f'^*D=D'$ we get $f^*\mu_*\sO_{X'}(D)=\nu_*\sO_{Y'}(D')$ which proves the result.
\end{proof}

From the proof above, we can immediately deduce the following easy corollary. For the definition of MJ-singularities, we refer to the paper \cite{Ein:SingMJDiscrapency}.
\begin{corollary} Let $f:Y\longrightarrow X$ be a smooth morphism of varieties. Then $X$ is MJ-canonical (MJ-log canonical) if and only if so is $Y$.
\end{corollary}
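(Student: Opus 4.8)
The plan is to recycle the geometry built in the proof of Proposition \ref{p:02}, specialized to the case $\mf{a}=\sO_X$ (so that there $Z=0$). Recall from \cite{Ein:SingMJDiscrapency} that the MJ-canonical and MJ-log canonical conditions on $X$ can be read off from a single log resolution $\mu\colon X'\to X$ of $\Jac_X$: writing $\Jac_X\cdot\sO_{X'}=\sO_{X'}(-J_{X'/X})$, the variety $X$ is MJ-canonical (resp.\ MJ-log canonical) exactly when every coefficient of the divisor $\widehat{K}_{X'/X}-J_{X'/X}$ is $\geq 0$ (resp.\ $\geq -1$), and similarly for $Y$. So the corollary comes down to comparing these coefficients. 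We may and do assume that $f$ is surjective, which is what makes the equivalence run in both directions.

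First I would take a log resolution $\mu\colon X'\to X$ of $\Jac_X$ as above and form the fiber product $Y'=X'\times_X Y$ with projections $\nu\colon Y'\to Y$ and $f'\colon Y'\to X'$, exactly as in the proof of Proposition \ref{p:02}. As shown there, $Y'$ is nonsingular, $\nu$ is a log resolution of $\Jac_Y$ with $\Jac_Y\cdot\sO_{Y'}=\sO_{Y'}(-J_{Y'/Y})$, $f'$ is smooth (a base change of $f$), and one has $f'^*\widehat{K}_{X'/X}=\widehat{K}_{Y'/Y}$ together with $f'^*J_{X'/X}=J_{Y'/Y}$; hence $f'^*(\widehat{K}_{X'/X}-J_{X'/X})=\widehat{K}_{Y'/Y}-J_{Y'/Y}$.

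Next I would use that a smooth morphism pulls back divisors without altering the vanishing orders of their components. Since $\Char k=0$, the morphism $f'$ is flat with reduced fibers, so for a prime divisor $E\subset X'$ the preimage $f'^{-1}(E)$ is a reduced divisor on $Y'$, each component $E'$ of which satisfies $\ord_{E'}(f'^*D)=\ord_E(D)$ for every $\nQ$-divisor $D$ on $X'$; distinct primes of $X'$ have preimages with no common component (such a component would dominate the intersection of two distinct prime divisors, hence have image of codimension $\geq 2$, impossible since $f'$ is smooth); and, $f'$ being surjective, every prime divisor of $X'$ does occur upstairs. Consequently the set of coefficients of $\widehat{K}_{Y'/Y}-J_{Y'/Y}=f'^*(\widehat{K}_{X'/X}-J_{X'/X})$ coincides with that of $\widehat{K}_{X'/X}-J_{X'/X}$; in particular the two divisors have the same minimal coefficient, and the two conditions ``all coefficients $\geq 0$'' and ``all coefficients $\geq -1$'' transfer between $X$ and $Y$. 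For the MJ-canonical statement one can alternatively shortcut this bookkeeping: $X$ is MJ-canonical iff $\widehat{\sI}(X,\sO_X)=\sO_X$, this identity pulls back to $\widehat{\sI}(Y,\sO_Y)=\sO_Y$ by Proposition \ref{p:02}, and faithful flatness of $f$ lets it descend again.

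I expect the only point needing care — and thus the (mild) main obstacle — to be the coefficient bookkeeping just described: one must make sure that lifting along the smooth surjection $f'$ neither introduces prime divisors with new, more negative coefficients nor drops any coefficient already present on $X'$. This is exactly what smoothness of $f'$ (reducedness of $f'^{-1}(E)$ and preservation of vanishing orders) and its surjectivity provide; every other ingredient is taken verbatim from the proof of Proposition \ref{p:02}.
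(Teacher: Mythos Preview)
Your proposal is correct and follows exactly the route the paper has in mind: the paper's proof consists solely of the remark that the corollary is immediate from the proof of Proposition~\ref{p:02}, and you have spelled out that deduction in detail by pulling back $\widehat{K}_{X'/X}-J_{X'/X}$ along the smooth base change $f'$. Your observation that surjectivity of $f$ is needed for the converse implication is a worthwhile clarification of a point the paper leaves implicit.
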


In the last of this section, we mention the injectivity theorem on nonsingular projective varieties. We recall the definition of abundant divisors. A good reference that fits our purpose is \cite[Preliminaries]{Ein:GlobalDivInj}.

\begin{definition}\label{def:02} A nef $\nQ$-divisor $D$ on a projective normal variety $X$ is called {\em abundant} if its Iitaka dimension $\kappa(D)$ equals its numerical dimension $\nu(D)$ which is the largest integer $m$ such that $D^m\cdot Y\neq 0$ for a subvariety $Y$ of dimension $m$. A nef $\nQ$-divisor on a projective variety is called {\em abundant} if so is its pullback to the normalization of the variety.
\end{definition}

\begin{remark}
Typical abundant divisors include semiample divisors and nef and big divisors. Here we quote two properties of nef and abundant divisors proved in \cite[Lemma 2.3.]{Ein:GlobalDivInj}. Assume that $X$ is a normal projective variety and $B$ and $C$ are nef and abundant $\nQ$-divisors, then $B+C$ is nef and abundant. If $f:Y\longrightarrow X$ is a subjective morphism with $Y$ normal and projective, then $f^*B$ is nef and abundant.
\end{remark}
The following Injectivity Theorem was stated in \cite[Corollary 5.12(b)]{Viehweg:LctVanishing} which is an extension of Koll{\'a}r's injectivity theorem \cite{Kollar:HighDireDualSheaf}. Note that we allow the boundary divisor $\Delta$ to be zero. In fact, when $\Delta$ is nonzero the theorem is exactly \cite[Corollary 5.12(b)]{Viehweg:LctVanishing}. But when $\Delta$ is zero, the theorem is the same as \cite[Theorem 3.1]{Ein:GlobalDivInj} for the case $\mf{a}=\sO_X$.

\begin{theorem}[{\cite[Corollary 5.2 (b)]{Viehweg:LctVanishing}}]\label{p:03} Let $L$ be a line bundle on a smooth projective variety $X$, and let $\Delta=\sum_i \delta_i\Delta_i$ be a simple normal crossings divisor with $0\leq \delta_i<1$ for all $i$. Assume that $L-\Delta$ is nef and abundant and that $B$ is an effective divisor such that $L-\Delta-\varepsilon B$ is $\nQ$-effective for some $0<\varepsilon<1$. Then the natural morphisms
$$H^i(X,\sO_X(K_X+L))\longrightarrow H^i(X,\sO_X(K_X+L+B))$$
are injective for all $i\geq 0$.
\end{theorem}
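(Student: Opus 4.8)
The statement is the Koll\'ar--Esnault--Viehweg injectivity theorem in the precise form proved in the cited reference, so the plan is to follow that argument, of which I only indicate the shape. The map in the conclusion is multiplication by the tautological section $s_B\in H^0\bigl(X,\sO_X(B)\bigr)$ vanishing along $B$, and the point is to show it is injective on $H^i$ for every $i$. The strategy is to realize both $\omega_X\otimes\sO_X(L)$ and $\omega_X\otimes\sO_X(L+B)$ as graded pieces of the Hodge filtration on the hypercohomology of a suitable twisted logarithmic de Rham complex, and to exploit degeneration at $E_1$ of the associated spectral sequence. To build the complex I would choose $N\in\nN$ divisible enough that $N\delta_i\in\nZ$ for all $i$ and $N\varepsilon\in\nZ$; the $\nQ$-effectivity hypothesis then provides an effective divisor $F$ with $N(L-\Delta-\varepsilon B)$ linearly equivalent to $F$, so that $\sO_X(NL)$ is $\sO_X$ of an effective combination $\sum_j a_jD_j$ of the components of a reduced divisor $D$ with $\Supp(\Delta)\cup\Supp(B)\cup\Supp(F)\subseteq D$, in which the coefficient of each $\Delta_i$ is $N\delta_i$ and that of $B$ is $N\varepsilon$. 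After a preliminary log resolution --- harmless for the groups and the map in question by Grauert--Riemenschneider vanishing together with the projection formula, and under which nefness and abundance of $L-\Delta$ persist by the remark following Definition \ref{def:02} --- one may assume $D$ has simple normal crossings, the usual readjustment of the fractional coefficients being routine. This is exactly the data needed for the cyclic covering construction of the reference: $\sO_X(L)$ underlies a rank-one integrable connection with logarithmic poles along $D$, realized as an eigensheaf of the direct image of $\sO$ from the cover trivializing the $N$-torsion, and the fact that the coefficient of $B$ is $N\varepsilon$ with $\varepsilon<1$ is what lets $B$ appear in the pole divisor.

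Granting that construction, I would deduce the result from two inputs. First, degeneration at $E_1$ of the Hodge-to-de Rham spectral sequence of the twisted logarithmic de Rham complex, which makes the top graded piece of the Hodge filtration inject into hypercohomology; this gives injections $H^i\bigl(X,\omega_X\otimes\sO_X(L)\bigr)\hookrightarrow\mathbb{H}^{n+i}$ of the complex (here $n=\dim X$), and the analogous injection with $L$ replaced by $L+B$. Second, the residue analysis comparing the logarithmic de Rham complex in which $B$ is omitted from the pole divisor with the one in which it is included: the induced map on hypercohomology is injective, indeed bijective in the pertinent degree, and under the identifications above it is precisely $\cdot\,s_B$. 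Putting these together in the evident commuting square forces $\cdot\,s_B$ to be injective for all $i$, which is the assertion.

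The one step that is not formal bookkeeping, and the one I expect to be the main obstacle, is securing the $E_1$-degeneration when $L-\Delta$ is merely nef and abundant rather than semiample. When $L-\Delta$ is semiample one passes to a further cyclic cover on which the ordinary Hodge--de Rham spectral sequence degenerates by classical Hodge theory, takes the relevant eigenspace summand, and invokes Kawamata--Viehweg vanishing on the cover to kill the higher direct images that would otherwise obstruct the descent of the degeneration. Abundance is what reduces the general case to this: after pullback to a suitable smooth birational model, a nef and abundant $\nQ$-divisor becomes $\nQ$-linearly equivalent to the pullback of an ample class under a fibration plus an effective $\nQ$-divisor supported on the exceptional locus, so one may instead invoke Kawamata's extension of Kawamata--Viehweg vanishing to nef and abundant divisors. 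The delicate part is then to track all the round-downs and keep the exceptional error terms under control, so that after the modifications the Hodge-theoretic pieces still compute $H^i$ of $\omega_X\otimes\sO_X(L)$ and of $\omega_X\otimes\sO_X(L+B)$; the remaining cohomological formalism is standard. (Conceptually the whole statement also drops out of Saito's theory of mixed Hodge modules via strictness of the Hodge filtration under direct image, which is the cleanest route but uses considerably heavier machinery than the cited source.)
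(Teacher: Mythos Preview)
The paper does not supply its own proof of this statement: it simply quotes the result from \cite[Corollary~5.12(b)]{Viehweg:LctVanishing} (with the observation that the case $\Delta=0$ is covered by \cite[Theorem~3.1]{Ein:GlobalDivInj}) and uses it as a black box in the proof of Theorem~\ref{thm:02}. So there is no ``paper's proof'' to compare against.

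Your sketch is a fair outline of the Esnault--Viehweg argument in the cited reference: the cyclic-cover construction giving a twisted logarithmic de Rham complex, $E_1$-degeneration of the associated Hodge-to-de Rham spectral sequence, and the residue comparison that identifies the inclusion of the smaller pole divisor into the larger with multiplication by $s_B$. You have also correctly flagged where the real content lies, namely obtaining the degeneration under the nef-and-abundant hypothesis rather than semiampleness, and the need to control round-downs after birational modification. As a pointer to the source this is adequate; for a self-contained write-up you would need to make the reduction from nef-and-abundant to the semiample case precise (Kawamata's structure theorem for nef and abundant divisors), and to check carefully that the preliminary log resolution does not alter the groups $H^i(X,\omega_X\otimes\sO_X(L))$ and $H^i(X,\omega_X\otimes\sO_X(L+B))$ --- this is where the condition $\lfloor\Delta\rfloor=0$ is used, and it is the one place where a careless treatment could introduce a genuine gap.
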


\section{Vanishing theorems for Mather-Jacobian multiplier ideals}

\noindent In this section, we prove the main results of this paper. Out strategy is to establish an injectivity theorem (Theorem \ref{thm:02}) for MJ-multiplier ideals along the line of \cite{Ein:GlobalDivInj}. Then the vanishing theorem for MJ-multiplier ideals (Corollary \ref{thm:01}) is a direct consequence of the injectivity theorem. However, we can actually prove the vanishing theorem in a more straightforward technique and we will show it in the proof of the vanishing theorem for the asymptotic MJ-multiplier ideals (Theorem \ref{thm:03}). We hope that using injectivity theorem could shed light on building more general theorems for MJ-multiplier ideals on arbitrary varieties.

\begin{theorem}\label{thm:02} Let $X$ be a Gorenstein projective variety and $\mf{d}_X$ be the lci-defect ideal of $X$. Let $\mf{a}\subseteq\sO_X$ be an ideal and $t\in
\nQ_{\geq 0}$. Assume that $A$, $B$ and $L$ are line bundles and $G$ is an effective divisor such that $\mf{a}\otimes A$ and $\mf{d}_X\otimes B$ are globally generated, $L-tA-B$ is nef and abundant, and $L-tA-B-\varepsilon G$ is $\nQ$-effective for some $0<\varepsilon<1$. Then the natural morphisms
$$H^i(X,\omega_X\otimes L\otimes \widehat{\sI}(\mf{a}^t))\longrightarrow H^i(X,\omega_X\otimes L\otimes \sO_X(G)\otimes \widehat{\sI}(\mf{a}^t))$$
are injective for all $i\geq 0$.
\end{theorem}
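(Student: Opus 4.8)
The plan is to reduce the statement to Theorem~\ref{p:03} on a suitable log resolution, using the Gorenstein hypothesis to control the Mather canonical divisor and the lci-defect ideal. First I would choose a single log resolution $f:Y\longrightarrow X$ that simultaneously resolves $\mf{a}\cdot\Jac_X\cdot\mf{d}_X\cdot\sO_X(G)$ (and the exceptional locus), so that $\mf{a}\cdot\sO_Y=\sO_Y(-Z)$, $\Jac_X\cdot\sO_Y=\sO_Y(-J_{Y/X})$, $\mf{d}_X\cdot\sO_Y=\sO_Y(-D_{\mf{d}})$, $\mf{J}_1\cdot\sO_Y=\sO_Y(-D_{\mf{J}_1})$, and $f^{-1}G$ together with all these divisors and $\widehat{K}_{Y/X}$ is supported on a simple normal crossings divisor. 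By Proposition~\ref{p:01}, $J_{Y/X}=D_{\mf{d}}+D_{\mf{J}_1}$. The key identity coming from the Gorenstein condition (the displayed formula $\mf{J}_1\cdot\sO_{\widehat X}\simeq\sO_{\widehat X}(\widehat K)\otimes\pi^*\omega_X^{-1}$ in the remark after Definition~\ref{def:01}, pulled up to $Y$) gives $\widehat{K}_{Y/X}=D_{\mf{J}_1}+f^*(\text{something making }K_Y=f^*(K_X)+\widehat K_{Y/X})$; more precisely, tracing through the definitions, $K_Y - f^*\omega_X$ (as divisor classes, writing $\omega_X=\sO_X(K_X)$) equals $\widehat K_{Y/X}-D_{\mf{J}_1}$. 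Hence on $Y$ one gets the crucial relation
$$K_Y + f^*L \;=\; f^*(K_X+L) + \widehat K_{Y/X}-D_{\mf{J}_1} \;=\; f^*(K_X+L)+\widehat K_{Y/X}-J_{Y/X}+D_{\mf{d}},$$
and adding $D_{\mf{d}}-\lfloor tZ\rfloor$ appropriately, the sheaf $\omega_X\otimes L\otimes\widehat\sI(\mf{a}^t)$ becomes (by the projection formula, since $\omega_X$, $L$ are invertible and $R^0f_*$ suffices by local vanishing for MJ-multiplier ideals, cf.\ \cite{Ein:MultIdeaMatherDis}) the pushforward of $\sO_Y(K_Y+M)$ for an explicit line bundle $M$ on $Y$.

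Next I would identify $M$ as $f^*L + \widehat K_{Y/X}-J_{Y/X}-\lfloor tZ\rfloor$ and write $M = f^*L + (\widehat K_{Y/X}-J_{Y/X}-tZ) + \{tZ\}$, so that $M-\Delta$ with $\Delta=\{tZ\}$ (the fractional part, a reduced-coefficient SNC boundary) equals $f^*L - tZ + \widehat K_{Y/X}-J_{Y/X}$. The point of the global generation hypotheses is: $\mf{a}\otimes A$ globally generated forces $Z\leq f^*A$ (choosing a general section), i.e.\ $f^*A - Z$ is effective; and $\mf{d}_X\otimes B$ globally generated forces $D_{\mf{d}}\leq f^*B$, i.e.\ $f^*B - D_{\mf{d}}$ is effective. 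Therefore
$$M-\Delta \;=\; f^*(L-tA-B) + (f^*(tA)-tZ) + (f^*B - D_{\mf{d}}) + (\widehat K_{Y/X}-D_{\mf{J}_1}),$$
where I've used $J_{Y/X}-D_{\mf{d}}=D_{\mf{J}_1}$ and $\widehat K_{Y/X}\geq D_{\mf{J}_1}$ (the latter because the fundamental class factors $c_X$ through $\omega_X$, so on the Nash blowup $\widehat K - D_{\mf{J}_1}$ is effective). The last three bracketed terms are effective $\nQ$-divisors, so $M-\Delta$ is numerically $f^*(L-tA-B)$ plus an effective divisor. Since $L-tA-B$ is nef and abundant and $f$ is a surjective morphism, $f^*(L-tA-B)$ is nef and abundant by the quoted properties in the remark after Definition~\ref{def:02}; adding an effective (hence in particular handled by the same lemma if it is itself nef and abundant — one should instead argue that the relevant KV-type input only needs $M-\Delta$ to be of the form nef-and-abundant plus effective, which is exactly the setup of \cite[Corollary 5.12(b)]{Viehweg:LctVanishing} / Theorem~\ref{p:03} where the "abundant" hypothesis is on $L-\Delta$ and effectivity is absorbed separately). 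I would be careful here and instead set the "$L$" of Theorem~\ref{p:03} to be $M$, its "$\Delta$" to be $\{tZ\}$, so that "$L-\Delta$" $= M-\{tZ\}$; I then need $M-\{tZ\}$ nef and abundant, which I get by writing it as $f^*(L-tA-B)$ plus the effective exceptional/defect terms and invoking that nef-and-abundant plus effective... is where I must be cautious, so the honest route is to absorb the effective terms into the round-down from the start: replace $tZ$ by $tZ + (\text{effective defect terms})$ before taking $\lfloor\cdot\rfloor$, so that the leftover nef class is genuinely $f^*(L-tA-B)$ on the nose up to the SNC fractional boundary.

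Finally, for the effective divisor $G$: let $B_Y$ be the reduced strict transform (or $f^*G$) of $G$; since $L-tA-B-\varepsilon G$ is $\nQ$-effective, its pullback $M-\{tZ\}-\varepsilon f^*G$ is $\nQ$-effective too (pullback of $\nQ$-effective under a morphism is $\nQ$-effective, adding the effective exceptional terms only helps). Then Theorem~\ref{p:03} applied on $Y$ with line bundle $M$, boundary $\{tZ\}$, and effective divisor $B_Y=f^*G$ gives injectivity of
$$H^i(Y,\sO_Y(K_Y+M))\longrightarrow H^i(Y,\sO_Y(K_Y+M+f^*G)).$$
Since $R^if_*\sO_Y(K_Y+M-f^*G')=0$ for $i>0$ (Grauert--Riemenschneider / local vanishing, again using that $M-f^*(\text{nef class})$ is $f$-nef-and-big-ish — concretely this is the local vanishing theorem for MJ-multiplier ideals, \cite[Theorem ?]{Ein:MultIdeaMatherDis}), the Leray spectral sequence collapses and $H^i(Y,\sO_Y(K_Y+M))=H^i(X,\omega_X\otimes L\otimes\widehat\sI(\mf{a}^t))$ and likewise for the twist by $G$; the injectivity descends. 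The main obstacle, as signalled above, is the bookkeeping that makes the "$L-\Delta$ is nef and abundant" hypothesis of Theorem~\ref{p:03} literally hold on $Y$: one must push every exceptional and lci-defect contribution into the integral part (via the round-down against $tZ$, using the global-generation bounds $Z\leq f^*A$, $D_{\mf{d}}\leq f^*B$, and $\widehat K_{Y/X}\geq D_{\mf{J}_1}$) so that the residual $\nQ$-divisor is exactly $f^*(L-tA-B)$ modulo a reduced SNC fractional part, and then invoke that pullbacks of nef and abundant divisors are nef and abundant.
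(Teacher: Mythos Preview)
Your overall strategy is the same as the paper's: pass to a log resolution, rewrite $\omega_X\otimes L\otimes\widehat{\sI}(\mf a^t)$ as $Rf_*\sO_Y(K_Y+M)$ via local vanishing, and apply Theorem~\ref{p:03} on $Y$. But there are two genuine gaps in the execution.

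First, a bookkeeping slip that creates a phantom term. You correctly derive $K_Y-f^*K_X=\widehat K_{Y/X}-D_{\mf J_1}$, so
\[
f^*(K_X+L)+\widehat K_{Y/X}-J_{Y/X}-\lfloor tZ\rfloor \;=\; K_Y+\bigl(f^*L-D_{\mf d}-\lfloor tZ\rfloor\bigr).
\]
Hence the correct choice is $M=f^*L-D_{\mf d}-\lfloor tZ\rfloor$; this is exactly the paper's $L'=\lceil f^*L-D_X-tZ\rceil$. Your stated $M=f^*L+\widehat K_{Y/X}-J_{Y/X}-\lfloor tZ\rfloor$ differs from this by $\widehat K_{Y/X}-D_{\mf J_1}=K_Y-f^*K_X$, which then reappears in your decomposition of $M-\Delta$. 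You try to dispose of it by claiming $\widehat K_{Y/X}\geq D_{\mf J_1}$, but that inequality is equivalent to $K_Y\geq f^*K_X$, i.e.\ to $X$ having canonical singularities, which is not part of the hypotheses. With the correct $M$ this term simply does not appear.

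Second, and this is the point you flag yourself as ``the main obstacle'': you only extract \emph{effectivity} of $f^*A-Z$ and $f^*B-D_{\mf d}$ from the global-generation hypotheses, and then struggle because nef-and-abundant plus effective need not be nef-and-abundant. The paper's observation is stronger and resolves this cleanly: since $\mf a\otimes A$ is globally generated, so is $\sO_Y(f^*A-Z)=f^{-1}\mf a\cdot\sO_Y\otimes f^*A$; thus $f^*A-Z$ is semiample, hence nef and abundant. Likewise $f^*B-D_{\mf d}$ is nef and abundant. Then with the correct $M$,
\[
M-\{tZ\}\;=\;f^*L-D_{\mf d}-tZ\;=\;f^*(L-tA-B)\;+\;t(f^*A-Z)\;+\;(f^*B-D_{\mf d})
\]
is a sum of nef and abundant $\nQ$-divisors, hence nef and abundant by the property recalled after Definition~\ref{def:02}. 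There is no need to absorb anything into the round-down. The $\nQ$-effectivity of $M-\{tZ\}-\varepsilon f^*G$ then follows because the last two summands are already effective and $f^*(L-tA-B-\varepsilon G)$ is $\nQ$-effective by hypothesis, so Theorem~\ref{p:03} applies directly.
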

\begin{proof} Write $n=\dim X$. Consider the fundamental class $c_X:\wedge^n\Omega^1_X\longrightarrow \omega_X$, which has the image $\mf{J}_1\otimes \omega_X$ where $\mf{J}_1$ is the ideal of the first Nash subscheme. So we have a surjective morphism
\begin{equation}\label{eq:01}
\wedge^n\Omega^1_X\longrightarrow \mf{J}_1\otimes\omega_X\longrightarrow 0.
\end{equation}
Since $\omega_X$ is a line bundle, we can identify the Nash blowup $\widehat{X}$ as the blowup of $X$ along the ideal $\mf{J}_1$. Furthermore, the surjective morphism (\ref{eq:01}) induces the diagram
$$\xymatrix{
\widehat{X}\simeq \Bl_{\mf{J}_1}X  \ar@{^{(}->}[r] &\nP(\wedge^n\Omega^1_X\otimes\omega^{-1}_X)\simeq \nP(\wedge^n\Omega^1_X)\ar[d]^{\pi}  \\
  &X & }$$
where $\pi: \nP(\wedge^n\Omega^1_X)\longrightarrow X$ is the natural projection. Denote by $\nu=\pi|_{\widehat{X}}:\widehat{X}\longrightarrow X$. Thus by tracing the tautological bundles in the diagram above, we deduce that on the Nash blowup $\widehat{X}$,
\begin{equation}\label{eq:02}
\mf{J}_1\cdot\sO_{\widehat{X}}\simeq\sO_{\widehat{X}}(\widehat{K})\otimes \nu^*\omega^{-1}_X,
\end{equation}
where $\widehat{K}$ is the Mather canonical divisor.

Let $\Jac_X$ be the Jacobian ideal of $X$. Consider a log resolution $f:X'\longrightarrow X$ of $\Jac_X\cdot\mf{a}\cdot \mf{J}_1\cdot \mf{d}_X$ and $G$ such that $$\Jac_X\cdot \sO_{X'}(-J_{X'/X}),\ \mf{a}\cdot\sO_{X'}=\sO_{X'}(-Z), \ \mf{J}_1\cdot\sO_{X'}=\sO_{X'}(-J_1), \mbox{ and } \mf{d}_X\cdot\sO_{X'}=\sO_{X'}(-D_X)$$
where $J_{X'/X},\ Z,\ J_1, \mbox{ and } D_X$ are effective divisors and the supports of them together with the supports of $f^*G$ and $\Exc (f)$ are simple normal crossings. By Proposition \ref{p:01}, on $X'$ we have an equality of effective divisors
\begin{equation}\label{eq:03}
J_{X'/X}=J_1+D_X.
\end{equation}
The morphism $f$ then factor through the Nash blowup $\widehat{X}$ as in the following diagram
$$\xymatrix{
X' \ar[rr]^{\widehat{f}}\ar[dr]_{f} && \widehat{X} \ar[dl]^{\nu} \\
              &X           &
}$$
and we have $\widehat{K}_{X'/X}\simeq K_{X'}-\widehat{f}^*(\widehat{K})$ (see \cite[1.6, 1.7]{Ein:DivValArc}), where $K_{X'}$ is the canonical divisor of $X'$. By using the identity (\ref{eq:02}), we obtain that
$$\widehat{K}_{X'/X}\simeq K_{X'}-\widehat{f}^*(\widehat{K})\simeq K_{X'}+J_1-f^*\omega_X.$$
Therefore, from the equality (\ref{eq:03}) we deduce that
\begin{equation}\label{eq:04}
\widehat{K}_{X'/X}-J_{X'/X}-\lfloor tZ\rfloor\simeq K_{X'}-D_X-\lfloor tZ\rfloor-f^*\omega_X.
\end{equation}
Now by assumption that $\mf{a}\otimes A$ and $\mf{d}_X\otimes B$ are globally generated, the line bundles
$f^*A-Z$ and $f^*B-D_X$ are all nef and abundant. Since $L-tA-B$ is nef and abundant, so is its birational pullback $f^*L-tf^*A-f^*B$.
Hence the divisor
$$f^*L-D_X-tZ=(f^*L-tf^*A-f^*B)+(f^*B-D_X)+(tf^*A-tZ)$$
is nef and abundant. Also, by assumption, we have $f^*L-tf^*A-f^*B-\varepsilon f^*G$ is $\nQ$-effective for some $0<\varepsilon<1$. Set
$$L'=\lceil f^*L-D_X-tZ\rceil,  \quad\mbox{and } \Delta=tZ-\lfloor tZ\rfloor.$$ Then $\Delta$ is a simple normal crossing divisor and $\lfloor \Delta\rfloor=0$. Note also that $L'-\Delta$ is nef and abundant and $L'-\Delta-\varepsilon f^*G$ is $\nQ$-effective. Thus by Theorem \ref{p:03}, the natural morphisms
\begin{equation}\label{eq:08}
H^i(X',\sO_{X'}(K_{X'}+L'))\longrightarrow H^i(X',\sO_{X'}(K_{X'}+L'+f^*G))
\end{equation}
are injective for all $i\geq 0$.
But the equation (\ref{eq:04}) tells us that
$$K_{X'}+\lceil f^*L-D_X-tZ\rceil)\simeq\widehat{K}_{X'/X}-J_{X'/X}-\lfloor tZ\rfloor+f^*\omega_X+f^*L$$
and in addition by the local vanishing theorem for MJ-multiplier ideals (\cite{Ein:MultIdeaMatherDis}),
$$R^if_*\sO_{X'}(K_{X'}+\lceil f^*L-D_X-tZ\rceil)=0,\mbox{ for }i>0.$$
Therefore, by the definition that $\widehat{\sI}(\mf{a}^t)=f_*\sO_{X'}(\widehat{K}_{X'/X}-J_{X'/X}-\lfloor tZ\rfloor)$ and the projection formula, we can identify
$$H^i(X,\omega_X\otimes L\otimes \widehat{\sI}(\mf{a}^t))=H^i(X', K_{X'}+\lceil f^*L-D_X-tZ\rceil),$$
$$H^i(X,\omega_X\otimes L\otimes \sO_X(G)\otimes \widehat{\sI}(\mf{a}^t))=H^i(X',\sO_{X'}(K_{X'}+L'+f^*G)).$$
Thus the theorem follows from the injectivity of (\ref{eq:08}) immediately.
\end{proof}
\begin{corollary}\label{thm:01} Let $X$ be a Gorenstein projective variety and $\mf{d}_X$ be the lci-defect ideal of $X$. Let $\mf{a}\subseteq\sO_X$ be an ideal and $t\in\nQ_{\geq 0}$. Assume that $A$, $B$ and $L$ are line bundles such that $\mf{a}\otimes A$ and $\mf{d}_X\otimes B$ are all globally generated and $L-tA-B$ is nef and big. Then one has
$$H^i(X,\omega_X\otimes L\otimes \widehat{\sI}(\mf{a}^t))=0,\quad\mbox{ for }i>0.$$
\end{corollary}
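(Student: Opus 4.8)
The plan is to deduce the corollary directly from the injectivity theorem (Theorem~\ref{thm:02}) by the usual Serre-vanishing trick, taking the effective divisor $G$ there to be a large multiple of an ample divisor.

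First I would fix an effective ample divisor $H$ on $X$ and, for a positive integer $m$ to be chosen later, set $G=mH$. Since $\omega_X\otimes L\otimes\widehat{\sI}(\mf{a}^t)$ is a coherent sheaf on the projective variety $X$, Serre vanishing provides an $m_0$ such that
$$H^i\bigl(X,\omega_X\otimes L\otimes\sO_X(mH)\otimes\widehat{\sI}(\mf{a}^t)\bigr)=0\qquad\text{for all }i>0,\ m\geq m_0.$$
Note that $G=mH$ need not be in any special position: the log resolution used in the proof of Theorem~\ref{thm:02} already takes care of that.

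Next I would check that the hypotheses of Theorem~\ref{thm:02} hold for this $G$. The line bundles $A,B,L$ with $\mf{a}\otimes A$ and $\mf{d}_X\otimes B$ globally generated are exactly those in the statement. Since $L-tA-B$ is nef and big it is nef and abundant: its pullback to the normalization of $X$ is again nef and big, hence has Iitaka dimension equal to its numerical dimension (the remark following Definition~\ref{def:02}). It remains to produce $0<\varepsilon<1$ with $L-tA-B-\varepsilon G$ $\nQ$-effective. Because $L-tA-B$ is big, Kodaira's lemma provides a fixed integer $k\geq 1$ with $L-tA-B-\tfrac1k H$ $\nQ$-effective; now taking $m\geq m_0$ and setting $\varepsilon=\tfrac1{km}\in(0,1)$ we get $\varepsilon G=\tfrac1k H$, so that $L-tA-B-\varepsilon G$ is $\nQ$-effective.

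Applying Theorem~\ref{thm:02} with this $G$ and $\varepsilon$ then yields injections
$$H^i\bigl(X,\omega_X\otimes L\otimes\widehat{\sI}(\mf{a}^t)\bigr)\hookrightarrow H^i\bigl(X,\omega_X\otimes L\otimes\sO_X(mH)\otimes\widehat{\sI}(\mf{a}^t)\bigr)$$
for all $i\geq 0$, and the target vanishes for $i>0$ by the choice of $m$; the corollary follows. I do not expect any genuine obstacle here, since all the substance is in Theorem~\ref{thm:02}; the only points deserving a word are that nef and big implies nef and abundant (including the passage to the normalization when $X$ is not normal) and that the freedom to shrink $\varepsilon$ absorbs the size of $m$ forced by Serre vanishing.
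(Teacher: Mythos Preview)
Your argument is correct and is essentially the same as the paper's: take $G$ sufficiently ample so that Serre vanishing kills the target cohomology, use that nef and big implies nef and abundant, and use bigness (via Kodaira's lemma) to arrange $L-tA-B-\varepsilon G$ $\nQ$-effective before applying Theorem~\ref{thm:02}. The paper states this in one paragraph without spelling out the choice of $\varepsilon$; your version just makes the Kodaira's-lemma step and the interplay between $m$ and $\varepsilon$ explicit.
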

\begin{proof} We can take a sufficient ample effective divisor $G$ on $X$ such that it eliminates all higher cohomology groups of $\omega_X\otimes L\otimes \sO_X(G)\otimes \widehat{\sI}(\mf{a}^t)$ by Serre's vanishing theorem. Since $L-tA-B$ is nef and big, it is nef and abundant and there exists a small number $0<\varepsilon<1$ such that $L-tA-B-\varepsilon G$ is $\nQ$-effective. Then we just apply the theorem above.
\end{proof}

\begin{corollary} Let $X$ be a local complete intersection projective variety and $\mf{a}\subseteq\sO_X$ be an ideal. Suppose that $A$ and $L$ are line bundles and $t\in\nQ_{\geq 0}$  such that $A\otimes \mf{a}$ is globally generated and $L-tA$ is nef and big. Then
$$H^i(X, \omega_X\otimes L\otimes\widehat{\sI}(\mf{a}^t))=0,\quad\mbox{ for }i>0.$$
\end{corollary}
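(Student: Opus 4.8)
The plan is to deduce this statement as the special case $B=\sO_X$ of Corollary \ref{thm:01}. The only input needed is that the lci-defect ideal of a local complete intersection variety is trivial, i.e. $\mf{d}_X=\sO_X$.

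First I would record why $\mf{d}_X=\sO_X$ when $X$ is a local complete intersection. By Definition \ref{def:01}, $\mf{d}_X=(\Jac_X:\mf{J}_1)$, where $\mf{J}_1\otimes\omega_X$ is the image of the fundamental class $c_X:\wedge^n\Omega^1_X\to\omega_X$. When $X$ is a local complete intersection, the classical description of the fundamental class shows that this image is exactly $\Jac_X\otimes\omega_X$, so $\mf{J}_1=\Jac_X$; hence $\mf{d}_X=(\Jac_X:\Jac_X)=\sO_X$. (This is precisely the content of the Remark following Definition \ref{def:01}; alternatively one may invoke the factorization $\Jac_X=\mf{d}_X\cdot\mf{J}_1$ of Proposition \ref{p:01} together with $\mf{J}_1=\Jac_X$ and the fact that $X$ is integral.)

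Next I would apply Corollary \ref{thm:01} verbatim, taking the same $X$, $\mf{a}$, $A$, $L$, $t$, and choosing $B=\sO_X$ to be the trivial line bundle. Then $\mf{d}_X\otimes B=\sO_X$ is globally generated, $\mf{a}\otimes A$ is globally generated by hypothesis, and $L-tA-B=L-tA$ is nef and big by hypothesis. Corollary \ref{thm:01} therefore gives
$$H^i(X,\omega_X\otimes L\otimes\widehat{\sI}(\mf{a}^t))=0,\qquad i>0,$$
which is exactly the desired vanishing.

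There is essentially no obstacle here: the statement is simply the lci specialization of Corollary \ref{thm:01}, in which the auxiliary line bundle $B$ needed to globally generate $\mf{d}_X$ can be taken trivial. The only point worth making explicit is the identification $\mf{d}_X=\sO_X$ above.
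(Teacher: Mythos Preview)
Your proposal is correct and follows exactly the paper's own argument: the paper simply notes that for a local complete intersection $\mf{d}_X=\sO_X$ and then applies Corollary~\ref{thm:01} (with $B$ trivial). Your additional explanation of why $\mf{d}_X=\sO_X$ just makes explicit what the paper leaves to the Remark after Definition~\ref{def:01}.
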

\begin{proof} Simply notice that if $X$ is a local complete intersection then its lci-defect ideal $\mf{d}_X$ is trivial and the previous corollary can be applied.
\end{proof}

We prove a Griffith-type vanishing theorem for a vector bundle. For the meaning that a vector bundled is twisted by a $\nQ$-divisor, we refer to the book \cite{Lazarsfeld:PosAG2}.

\begin{theorem}\label{thm:04} Let $X$ be a Gorenstein projective variety and $E$ a vector bundle of rank $\geq 2$. Let $\mf{a}$ be an ideal of $\sO_X$ and $t\in\nQ_{\geq0}$. Assume that $A$ and $B$ are line bundles such that $\mf{a}\otimes A$ and $\mf{d}_X\otimes B$ are all globally generated. If one of the following two conditions holds:
\begin{itemize}
\item [(1)] $E$ is nef and big and $E(-tA-B)$ is nef;
\item [(2)] $E$ is nef and $E(-tA-B)$ is nef and big,
\end{itemize}
then one has
$$H^i(X,\omega_X\otimes \det E\otimes S^mE\otimes \widehat{\sI}(\mf{a}^t))=0,\quad \mbox{ for  }i>0,\ m\geq 0.$$
\end{theorem}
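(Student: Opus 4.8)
The plan is to deduce the theorem from Corollary \ref{thm:01} by passing to the projective bundle of $E$. Put $r=\rank E\geq 2$ and let $\pi\colon P=\nP(E)\longrightarrow X$ be the projectivization, in the convention for which $\pi_*\sO_P(m)=S^mE$ for every $m\geq 0$; write $\xi$ for the class of the tautological line bundle $\sO_P(1)$. Since $X$ is Gorenstein and $\pi$ is smooth of relative dimension $r-1$, the variety $P$ is again Gorenstein and projective, and
$$\omega_P\;\simeq\;\pi^*(\omega_X\otimes\det E)\otimes\sO_P(-r).$$
A standard computation with the Leray spectral sequence, using the projection formula (legitimate here because $\sO_P(m)$ is a line bundle and $\pi$ is flat) and the vanishing $R^j\pi_*\sO_P(k)=0$ for $j>0$, $k\geq 0$, then yields for every $m\geq 0$
$$H^i\big(X,\,\omega_X\otimes\det E\otimes S^mE\otimes\widehat{\sI}(\mf{a}^t)\big)\;\simeq\;H^i\big(P,\,\omega_P\otimes\sO_P(m+r)\otimes\pi^*\widehat{\sI}(\mf{a}^t)\big),$$
where on the right one has used $\sO_P(m)\otimes\pi^*(\omega_X\otimes\det E)=\omega_P\otimes\sO_P(m+r)$. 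Because $\pi$ is flat, $\pi^*\widehat{\sI}(\mf{a}^t)$ is an ideal sheaf of $\sO_P$, and by Proposition \ref{p:02} applied to the smooth morphism $\pi$ it coincides with $\widehat{\sI}(P,\mf{b}^t)$, where $\mf{b}=\mf{a}\cdot\sO_P$.

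It then suffices to apply Corollary \ref{thm:01} on the Gorenstein projective variety $P$ with the line bundles $A'=\pi^*A$, $B'=\pi^*B$ and $L'=\sO_P(m+r)$. Since $\pi$ is flat one has $\pi^*\mf{a}=\mf{b}$ and, by Proposition \ref{q:01}(2), $\pi^*\mf{d}_X=\mf{d}_P$; as global generation is preserved under pullback, the sheaves $\mf{b}\otimes A'=\pi^*(\mf{a}\otimes A)$ and $\mf{d}_P\otimes B'=\pi^*(\mf{d}_X\otimes B)$ are globally generated. The remaining point is to check that $L'-tA'-B'=(m+r)\xi-t\pi^*A-\pi^*B$ is nef and big on $P$. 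In the chosen convention, $E$ is nef (resp. nef and big) if and only if $\xi$ is nef (resp. nef and big), and $E(-tA-B)$ is nef (resp. nef and big) if and only if its tautological class $\xi-t\pi^*A-\pi^*B$ is nef (resp. nef and big). Writing
$$(m+r)\xi-t\pi^*A-\pi^*B\;=\;m\,\xi\;+\;(r-1)\,\xi\;+\;\big(\xi-t\pi^*A-\pi^*B\big),$$
under hypothesis (1) the middle term is nef and big (here $r-1\geq 1$) and the other two terms are nef, while under hypothesis (2) the last term is nef and big and the other two are nef; in both cases the sum is nef and big. Corollary \ref{thm:01} therefore gives $H^i\big(P,\,\omega_P\otimes\sO_P(m+r)\otimes\widehat{\sI}(P,\mf{b}^t)\big)=0$ for all $i>0$, which is the asserted vanishing.

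I expect no essential difficulty: the genuine cohomological content is already isolated in Corollary \ref{thm:01}, and what remains is a formal transfer along the smooth projective morphism $\pi$. The steps requiring care are bookkeeping ones: fixing the $\nP(E)$ convention consistently so that $\pi_*\sO_P(m)=S^mE$ and the relative dualizing sheaf is $\sO_P(-r)\otimes\pi^*\det E$; translating the $\nQ$-twisted nef/big hypotheses on $E$ and on $E(-tA-B)$ into the corresponding statements for $\xi$ and $\xi-t\pi^*A-\pi^*B$ on $P$, where the hypothesis $\rank E\geq 2$ is used through the $(r-1)\xi$ summand in case (1); and noting that the projection formula is unaffected by the twist with the non-locally-free sheaf $\widehat{\sI}(\mf{a}^t)$ because $\sO_P(m)$ is a line bundle and $\pi$ is flat.
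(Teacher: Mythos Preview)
Your proposal is correct and follows essentially the same route as the paper: pass to $P=\nP(E)$, identify $\omega_P$, pull back the data via Propositions~\ref{q:01} and~\ref{p:02}, verify that $(m+r)\xi-t\pi^*A-\pi^*B$ is nef and big under either hypothesis by the same additive decomposition, apply Corollary~\ref{thm:01} on $P$, and push down via the projection formula and Leray. Your write-up is in fact slightly more explicit than the paper's in justifying the projection-formula/Leray step for the non-locally-free twist $\widehat{\sI}(\mf{a}^t)$.
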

\begin{proof} Write $r=\rank E$. Let $Y=\nP(E)$ with the natural projection $f:Y\longrightarrow X$ and let $L=\sO_{Y}(1)$ be the tautological line bundle of $Y$. Note that $f$ is a smooth morphism and $Y$ is also Gorenstein. Set $\mf{b}=\mf{a}\cdot\sO_Y$.

By Proposition \ref{p:01} and Proposition \ref{q:01}, we have
\begin{equation}\label{eq:10}
\Jac_X=\mf{d}_X\cdot\mf{J}^X_1,\mbox{ and }\Jac_Y=\mf{d}_Y\cdot\mf{J}^Y_1
\end{equation}
where $\Jac_Y=\Jac_X\cdot\sO_Y$, $\mf{J}^Y_1=\mf{J}^X_1\cdot\sO_Y$ and $\mf{d}_Y=\mf{d}_X\cdot\sO_Y$.
Note that $\mf{b}\otimes f^*A$ and $\mf{d}\otimes f^*B$ are all globally generated by the assumption. The conditions (1) and (2) means that either $L$ is nef and big and $L-tf^*A-f^*B$ is nef, or $L$ is nef and $L-tf^*A-f^*B$ is nef and big. Thus applying  Corollary \ref{thm:01} to $Y$, it is easy to establish vanishing
$$H^i(Y,\omega_Y\otimes L^m\otimes \widehat{\sI}(\mf{b}^t))=0,\quad \mbox{ for }i>0,\ m\geq 2.$$
The duality theory tells us that
$\omega_Y=f^*\omega_X\otimes \det \Omega^1_{Y/X}$.
In addition, on $Y$ there is an short exact sequence
$$0\longrightarrow \Omega^1_{Y/X}(1)\longrightarrow f^*E\longrightarrow \sO_Y(1)\longrightarrow 0,$$
from which  we obtain
$f^*\det E=\det \Omega^1_{Y/X}\otimes \sO_Y(r)$,
and therefore
$$\omega_Y=f^*(\omega_X\otimes\det E)(-r). $$
Furthermore, by Proposition \ref{p:02}, $\widehat{\sI}(Y,\mf{b}^t)=f^*\widehat{\sI}(X,\mf{a}^t)$. So we have
$$\omega_Y\otimes L^m\otimes \sI(\mf{b}^t)=f^*(\omega_X\otimes\det E\otimes \widehat{\sI}(X,\mf{a}^t))(m-r).$$
The result follows if we take $m\geq r$ and use the projection formula.
\end{proof}

In the last of this section, we prove a vanishing theorem for the asymptotic construction of MJ-multiplier ideals. For the classical asymptotic multiplier ideals on nonsingular varieties we refer to the book \cite{Lazarsfeld:PosAG2}. Let us briefly recall the definition of asymptotic MJ-multiplier ideals associated to a line bundle. More detailed discussion and application can be found in the paper \cite{Niu:Nullstel}.

Given a line bundle $L$ on a variety $X$. Write $\mf{b}_k$ to be the base ideal of the linear system $|kL|$ for an integer $k\geq 1$. Then the collection of ideals $\{\mf{b}_k\}_{k\geq 1}$ is a graded family of ideals satisfying $\mf{b}_i\cdot\mf{b}_j\subseteq \mf{b}_{i+j}$ for all $i,j\geq 1$. (More examples of graded family can be found in \cite[Example 1.2]{Ein:UniBdSymPw}.) Fix an index $m\geq 1$ and consider the set of MJ-multiplier ideals
\begin{equation*}
\{\widehat{\sI}(X,\mf{b}_{mp}^{\frac{1}{p}})\}_{p\geq 1}
\end{equation*}
It is standard to check that there is a unique maximal ideal in the above set. This ideal is defined to be the {\em asymptotic Mather-Jacobian multiplier ideal at level $m$ associated to $L$} and is denoted by $\widehat{\sI}(||mL||)$. A basic fact that plays the essential role in the asymptotic multiplier ideal theory is that for $p\gg0$, we actually have
$\widehat{\sI}(||mL||)=\widehat{\sI}(X,\mf{b}_{mp}^{\frac{1}{p}}).$

\begin{theorem}\label{thm:03} Let $X$ be a Gorenstein projective variety and $\mf{d}_X$ be the lci-defect ideal of $X$. Let $A$, $B$ and $L$ be line bundles on $X$ such that $\mf{d}_X\otimes B$ is globally generated and $L$ has non-negative Iitaka dimension. Suppose that  one of the following two conditions holds
\begin{itemize}
\item [(1)] $A-B$ is nef and big;
\item [(2)] $A-B$ is nef and $L$ is big.
\end{itemize}
Then one has
$$H^i(X,\omega_X\otimes \sO_X(mL+A)\otimes \widehat{\sI}(||mL||))=0,\mbox{ for }i>0.$$
\end{theorem}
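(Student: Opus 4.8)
The plan is to follow the ``more straightforward'' route indicated in the introduction: reduce to a single MJ-multiplier ideal at a high level $p$, pass to a log resolution, and apply the classical Kawamata--Viehweg vanishing theorem directly, without invoking the injectivity Theorem \ref{thm:02}.

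First I would fix $p\gg 0$ so that $\widehat{\sI}(||mL||)=\widehat{\sI}(X,\mf{b}_{mp}^{1/p})$, where $\mf{b}_{mp}$ is the base ideal of $|mpL|$ (such $p$ exist because $\kappa(L)\geq 0$). Then I choose a common log resolution $f:X'\to X$ of $\Jac_X\cdot\mf{b}_{mp}\cdot\mf{J}_1\cdot\mf{d}_X$ with $\mf{b}_{mp}\cdot\sO_{X'}=\sO_{X'}(-Z)$, $\mf{J}_1\cdot\sO_{X'}=\sO_{X'}(-J_1)$, $\mf{d}_X\cdot\sO_{X'}=\sO_{X'}(-D_X)$, $\Jac_X\cdot\sO_{X'}=\sO_{X'}(-J_{X'/X})$, all supports (together with $\Exc(f)$) simple normal crossings. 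Exactly as in the proof of Theorem \ref{thm:02}, Proposition \ref{p:01} gives $J_{X'/X}=J_1+D_X$, while the identity $\mf{J}_1\cdot\sO_{\widehat{X}}\simeq\sO_{\widehat{X}}(\widehat{K})\otimes\nu^*\omega^{-1}_X$ yields $\widehat{K}_{X'/X}\simeq K_{X'}+J_1-f^*\omega_X$, so that
$$\widehat{K}_{X'/X}-J_{X'/X}-\lfloor\tfrac1p Z\rfloor\ \simeq\ K_{X'}-D_X-\lfloor\tfrac1p Z\rfloor-f^*\omega_X.$$
Combining the projection formula with the local vanishing theorem for MJ-multiplier ideals (which gives $R^if_*\sO_{X'}(\widehat{K}_{X'/X}-J_{X'/X}-\lfloor\frac1p Z\rfloor)=0$ for $i>0$, hence the same after twisting by the line bundle $\omega_X\otimes\sO_X(mL+A)$), I would identify
$$H^i\bigl(X,\omega_X\otimes\sO_X(mL+A)\otimes\widehat{\sI}(||mL||)\bigr)\ =\ H^i\bigl(X',\sO_{X'}(K_{X'}-D_X-\lfloor\tfrac1p Z\rfloor+f^*(mL+A))\bigr).$$

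Next I would put the divisor on the right into Kawamata--Viehweg form. Let $M_p:=f^*(mpL)-Z$ be the moving part; since $\mf{b}_{mp}\otimes\sO_X(mpL)$ is globally generated, $M_p$ is globally generated, hence nef, and as $f^*(mpL)=p\,f^*(mL)$ we get $\tfrac1p Z=f^*(mL)-\tfrac1p M_p$. Therefore, setting $D:=f^*A-D_X+\tfrac1p M_p$, a direct computation gives $\lceil D\rceil=f^*(mL+A)-D_X-\lfloor\tfrac1p Z\rfloor$, the fractional part of $D$ has simple normal crossing support by construction, and the group above is $H^i(X',\sO_{X'}(K_{X'}+\lceil D\rceil))$. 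It then remains to check that $D$ is nef and big. Global generation of $\mf{d}_X\otimes B$ makes $f^*B-D_X$ nef, so $f^*A-D_X=(f^*B-D_X)+f^*(A-B)$; under condition (1) this is nef plus nef-and-big, hence nef and big, and adding the nef divisor $\tfrac1p M_p$ leaves it nef and big. Under condition (2), $f^*A-D_X$ is nef (nef plus nef), while bigness of $L$ forces $\kappa(mpL)=\dim X$ for $p\gg 0$, so the morphism resolving $|mpL|$ is generically finite onto its image and $M_p$ is nef and big; thus $D$ is again nef and big. Kawamata--Viehweg vanishing then gives $H^i(X',\sO_{X'}(K_{X'}+\lceil D\rceil))=0$ for $i>0$, which is the assertion.

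The one point requiring care is the bigness of the moving part $M_p$ under condition (2): one must argue that for $p$ large the linear system $|mpL|$ already has maximal Iitaka dimension $n=\dim X$, which does hold because $L$, hence $mL$, is big. One also needs a single $p$ that simultaneously stabilizes the asymptotic multiplier ideal and realizes this bigness, but both are ``$p\gg 0$'' conditions, so such a $p$ exists. Everything else — the bookkeeping on the resolution and the identification of the cohomology groups — is parallel to the argument for Theorem \ref{thm:02}, and is precisely where the Gorenstein hypothesis enters: invertibility of $\omega_X$, the blow-up description $\widehat{X}\simeq\Bl_{\mf{J}_1}X$, and Proposition \ref{p:01}.
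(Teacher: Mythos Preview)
Your proposal is correct and follows essentially the same route as the paper's proof: fix $p\gg 0$ computing the asymptotic ideal, pass to a log resolution of $\Jac_X\cdot\mf{b}_{mp}\cdot\mf{J}_1\cdot\mf{d}_X$, use Proposition~\ref{p:01} and the identity $\widehat{K}_{X'/X}\simeq K_{X'}+J_1-f^*\omega_X$ to rewrite the relevant divisor, and conclude by Kawamata--Viehweg together with local vanishing and the projection formula. The only cosmetic difference is that for condition~(1) the paper simply invokes Corollary~\ref{thm:01} (with $\mf{a}=\mf{b}_{mp}$, $t=1/p$, and the twisting line bundle $mpL$, so that $L-tA-B$ becomes $A-B$), whereas you treat both cases uniformly on the resolution; your uniform treatment is exactly what unwinding Corollary~\ref{thm:01} would give, so the two arguments are the same in substance.
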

\begin{proof}Taking $p$ sufficiently large, we have $$\widehat{\sI}(||mL||)=\widehat{\sI}(\mf{b}_{pm}^{\frac{1}{p}}),$$ where $\mf{b}_{pm}$ is the base ideal of the system $|pmL|$. Note that $\mf{b}_{mp}\otimes L^{mp}$ is globally generated. If assume the condition (1), then we just apply Corollary \ref{thm:01} directly to get the desired vanishing result. Now we assume the condition (2) for the rest of the proof.

Take a log resolution $f:X'\longrightarrow X$ of $\Jac_X\cdot\mf{b}_{mp}\cdot \mf{J}_1\cdot \mf{d}_X$, where $\mf{J}_1$ is the ideal of the first Nash subscheme of $X$, such that $$\Jac_X\cdot \sO_{X'}(-J_{X'/X}),\ \mf{b}_{mp}\cdot\sO_{X'}=\sO_{X'}(-F), \ \mf{J}_1\cdot\sO_{X'}=\sO_{X'}(-J_1), \mbox{ and } \mf{d}_X\cdot\sO_{X'}=\sO_{X'}(-D_X)$$
where $J_{X'/X},\ F,\ J_1, \mbox{ and } D_X$ are effective divisors and the supports of them together with the support of $\Exc (f)$ are simple normal crossings. As we have discussed in the proof of Theorem \ref{thm:02}, such log resolution factors through the Nash blowup of $X$. Furthermore, we have
$$J_{X'/X}=J_1+D_X, \mbox{ and } \widehat{K}_{X'/X}\simeq K_{X'}+J_1-f^*\omega_X.$$
Hence we deduce that
$$\widehat{K}_{X'/X}-J_{X'/X}-\frac{1}{p}F\simeq K_{X'}-f^*\omega_X-D_X-\frac{1}{p}F.$$
On $X'$, we define a line bundle $$L'=f^*(mpL)\otimes \sO_{X'}(-F).$$
Since $mpL$ is big and $\mf{b}_{mp}$ is the base ideal of $mpL$, we obtain that $L'$ is big and globally generated. Now we can write
\begin{eqnarray*}
 & &f^*(\omega_X+mL+A)+\widehat{K}_{X'/X}-J_{X'/X}-\frac{1}{p}F\\
    &\simeq &f^*(mL)+f^*A+K_{X'}-D_X-\frac{1}{p}F\\
    &\simeq & K_{X'}+\frac{1}{p}(f^*(pmL)-F)+f^*(A-B)+f^*B-D_X.
\end{eqnarray*}
Notice that in the last expression, we have $f^*(pmL)-F=L'$ is nef and big and $f^*(A-B)$ and $f^*B-D_X$ are all nef. Hence we apply the Kawamata-Viehweg vanishing theorem to the last expression to deduce that
$$H^i(X',f^*(\omega_X+mL+A)+\widehat{K}_{X'/X}-J_{X'/X}-\lfloor\frac{1}{p}F\rfloor)=0, \quad \mbox{ for }i>0.$$
Finally, using the projection formula and the definition of MJ-multiplier ideals and the local vanishing property, we deduce the desired vanishing
$$H^i(X,\omega_X\otimes\sO_X(mL+A)\otimes \widehat{\sI}(\mf{b}^{\frac{1}{p}}_{mp}))=0, \quad \mbox{ for }i>0.$$

\end{proof}
\bibliographystyle{alpha}

\end{document}